\documentclass[10pt, reqno]{amsart}

\usepackage[T1]{fontenc}
\usepackage[utf8]{inputenc}
\usepackage{lmodern}

\usepackage{amsmath,amssymb,amsfonts,amsthm,mathtools}
\usepackage[mathscr]{eucal}

\usepackage{graphicx}
\usepackage{booktabs}
\usepackage{enumerate}
\usepackage{comment}
\usepackage{float}
\usepackage{subcaption}
\usepackage{cancel}
\usepackage{nicematrix}
\usepackage{systeme}

\usepackage{tikz,tikz-cd}
\usetikzlibrary{calc}
\usetikzlibrary{decorations.markings}
\usetikzlibrary{decorations.pathreplacing}

\usepackage[colorlinks=true,
            linkcolor=blue,
            citecolor=blue,
            urlcolor=blue,
            bookmarks=false]{hyperref}
\usepackage[nameinlink,capitalize]{cleveref}

\allowdisplaybreaks
\numberwithin{equation}{section}
\numberwithin{figure}{section}

\newtheorem{theorem}{Theorem}[section]
\newtheorem{proposition}[theorem]{Proposition}

\theoremstyle{definition}

\newtheorem*{Examples}{Examples}
\newtheorem*{Example}{Example}

\theoremstyle{remark}
\newtheorem{remark}[theorem]{Remark}

\newtheorem*{Remark}{Remark}

\newenvironment{entry-cor}[1][Cor]{%
  \begin{trivlist}
  \item[\hskip\labelsep{\normalfont\scshape #1}]%
}{\end{trivlist}}

\usepackage{etoolbox}

\makeatletter
\patchcmd\@thm
  {\let\thm@indent\indent}
  {\let\thm@indent\noindent}
  {}{}
\makeatother

\makeatletter
\renewenvironment{proof}[1][\proofname]{%
  \par
  \pushQED{\qed}%
  \normalfont \topsep6\p@\@plus6\p@\relax
  \trivlist
  \item[\hskip\labelsep\scshape #1\@addpunct{.}]%
  \ignorespaces
}{%
  \popQED\endtrivlist\@endpefalse
}
\makeatother

\theoremstyle{plain}

\newcommand{\entryheading}{}
\newtheorem*{entrybase}{\entryheading}

\newcounter{entryref}

\newenvironment{entry}[1]
  {%
    \renewcommand{\entryheading}{Entry~#1}%
    \begin{entrybase}%
    \renewcommand{\theentryref}{#1}%
    \refstepcounter{entryref}%
  }
  {\end{entrybase}}

\newenvironment{corentry}[2][]
  {%
    \renewcommand{\entryheading}{%
      Corollary%
      \if\relax\detokenize{#1}\relax
      \else
        ~(#1)%
      \fi
      \if\relax\detokenize{#2}\relax
      \else
        ~to Entry~#2%
      \fi
    }%
    \begin{entrybase}%
    \renewcommand{\theentryref}{#2}%
    \refstepcounter{entryref}%
  }
  {\end{entrybase}}

\newmuskip\pFqskip
\mathchardef\pFcomma=\mathcode`,

\newcommand{\cfplus}{\genfrac{}{}{0pt}{}{}{+}}
\newcommand{\cfminus}{\genfrac{}{}{0pt}{}{}{-}}
\newcommand{\cfdots}{\genfrac{}{}{0pt}{}{}{\cdots}}

\newcommand*{\fplus}{\cfplus}
\newcommand*{\fminus}{\cfminus}
\newcommand*{\fdots}{\cfdots}

\title[Ramanujan's elementary continued fractions]
      {An elementary treatment of Ramanujan's elementary continued fractions}

\author[G.~Bhatnagar]{Gaurav Bhatnagar
}
\address{RamanujanExplained.org, 18 Chitra Vihar, Delhi 110092,India.}
\email{bhatnagarg@gmail.com}

\subjclass[2020]{Primary 11A55; Secondary 40A15}

\keywords{Ramanujan's notebooks, elementary continued fractions,
finite continued fraction identities, continued fraction transformations,
formal power series}

\begin{document}

\begin{abstract}
We study  the elementary continued fractions of Ramanujan. The  entries considered are precursors of his hypergeometric and $q$-hypergeometric continued fractions. They appear in the first third of Chapter 12 of Ramanujan's second notebook.
\end{abstract}

\maketitle
\begin{flushright}
\begin{minipage}{0.65\textwidth}
\raggedleft
\itshape
His mastery of continued fractions was, on the
formal side at any rate, beyond that of any mathematician
in the world\dots
\par\medskip
\normalfont
---Hardy~\cite[p.~XXX]{Ramanujan-CW}
\end{minipage}
\end{flushright}

\bigskip

\section{Introduction}

The objective of this paper is to study Ramanujan's early work on continued fractions.  We say it is early, because most of the entries appear in the first volume of Ramanujan's famous Notebooks \cite{RamanujanNB}, and come first in Ramanujan's own re-organization of his results on continued fractions, in Chapter 12 of his second notebook. We will find that Ramanujan's mastery of continued fractions has its roots in utter simplicity---his early continued fractions do not involve any mathematics beyond what he may have learnt as a schoolboy. 

Even so, the treatment of these continued fractions by Berndt~\cite{Berndt1989}, is anything but elementary.
In the words of Berndt and Bhargava~\cite{BB1993}, it is written for {\em highbrows}---as opposed to {\em lowbrows}---whom they address in \cite{BB1993}:
\begin{quote}
Many of Ramanujan's beautiful discoveries, however, are easily
understood, are elementary, and appeal to a wide variety of tastes.
Thus, this paper is written for \emph{lowbrows}. Only elementary
algebra is needed to prove the lion's share of theorems reported
here. 
%{\sl [A sentence deleted.]}
%Most are found in the unorganized portion of Ramanujan's
%second notebook, his third notebook, and problems that he posed
%for readers of the \emph{Journal of the Indian Mathematical
%Society}. 
%The results we describe fall under the headings of
%elementary algebra, equal sums of powers, and elementary
%number theory.
\end{quote}
%\hfill---Berndt and Bhargava~\cite{BB1993}
The purpose of this paper is to provide such an approach to Ramanujan's elementary continued fractions. 

Regarding Ramanujan's own approach, we have the words of Hardy~\cite[p.~XXXV]{Ramanujan-CW} to guide us:
\begin{quote}
He worked, far more than a majority of modern mathematicians, by induction from numerical examples. 
\end{quote}
In this, Ramanujan has been compared with Euler. However, unlike Ramanujan,  Euler left behind extensive notes to explain his thinking. As Polya~\cite[p.~90]{Polya1954I} said:
\begin{quote} Yet Euler seems to me to be almost unique in one respect: he takes pains to present the relevant inductive evidence carefully, in detail, in good order. He presents it convincingly but honestly, as a genuine scientist should do. His presentation is ``the candid exposition of ideas that led him to those discoveries'' and has a distinctive charm.
\end{quote}

In the case of Ramanujan, we do not know what led him to his discoveries. Ramanujan just listed his results in his notebooks, and left his derivations for later. But due to his unfortunate premature death,  he never got an opportunity to explain his unpublished results. In our own presentation, we have taken hints from Polya's words, to understand Ramanujan's thought process, by exploring how to motivate and discover his results by inductive reasoning. 

%Ramanujan's entries presented here all rely on an inductive approach. 
We present Ramanujan's entries as identities of terminating continued fractions: the final term is explicitly specified. As with terminating series, no question of convergence arises. When Ramanujan lists a few terms followed by $\&$c, by which he meant `etc.', or rather, `and so on', we take it to mean that the pattern continues for $n$ terms. This is the view we have taken in our statement of Ramanujan's entries. 
%At the very end, we will comment on the differences in our approach with the one presented in \cite{Berndt1989}.

How reasonable is this view? An illustration is perhaps worth a thousand words. Consider the following calculation:
\begin{multline*}
1 = \frac{2}{2}
=\frac{2}{1+1} =\frac{2}{1+\dfrac{3}{3}} 
=\frac{2}{1+\dfrac{3}{2+\dfrac{4}{4}}} 
=\cdots 
= 
\frac{2}{1}
\fplus
\frac{3}{2}
\fplus
\frac{4}{3}
\fplus
\frac{5}{4}
\fplus \genfrac{}{}{0pt}{}{}{\&\mathrm{c}.}
%&=\frac{x+1}{x+\dfrac{x+2}{x+1+\dfrac{x+3}{x+2+\dfrac{x+4}{x+4}}}} \\
%&=\frac{x+1}{x+\dfrac{x+2}{x+1+\dfrac{x+3}{x+2+\dfrac{x+4}{x+3+\dfrac{x+4}{x+4}}}}}.
\end{multline*}
We consider the continued fraction up to $n$ terms, and call it $X_n$. Then it is clear that $X_n$ is a constant sequence, converging to $1$.  This is our interpretation of his corollary to Entry 7 of Chapter 12, which comes in Part II of Ramanujan's Notebooks, by Berndt. (We have labelled it Entry II.12.7 below.) 

However, usually, the theory of continued fractions considers
sequences of convergents, given by
\[
%\frac{P_n}{Q_n}=
\frac{a_1}{b_1}
\fplus
\frac{a_2}{b_2}
\fplus\fdots\fplus
\frac{a_n}{b_n}.
\]
The infinite continued fraction is said to converge when this sequence converges. But to apply the theory of convergence for such an example seems artificial and unnecessary. 
Many of Ramanujan's examples (like the one above) are of the form
$$X_n=\frac{a_1}{b_1}\fplus\frac{a_2}{b_2}\fplus\fdots\fplus\frac{a_n}{b_n+c_n},$$
where $X_n$ is a constant.
 We simply regard it to be a constant sequence, which (of-course) converges. A similar approach proves two transformation formulas given in this chapter. 

The other type of continued fraction we have considered is a result  of the form
$$\sum_{k=1}^n A_k =  \frac{a_1}{b_1}\fplus\frac{a_2}{b_2}\fplus\fdots\fplus\frac{a_n}{b_n}.$$
Here the $n$th partial sum of a series equals the $n$th convergent of a continued fraction, and by the definition of the convergence of series and continued fractions, the continued fraction converges if and only if the series converges.
Thus no further proof is required for convergence. 

Berndt's treatment of Ramanujan's elementary continued fractions is different because he considers $\&$c (`and so on') to mean `up to infinity'. This immediately leads to questions about the convergence of these continued fractions. 

In the case of series, there are excellent reasons why this is the right way to do things. On this matter, we quote Edwards~\cite[p.~167]{Edwards1979}:
\begin{quote}
The infusion of infinite series into the ``analytic art'' of the seventeenth century raised immediate questions as to their behavior with respect to the ordinary algebraic properties of addition, subtraction, multiplication, division, and the extraction of roots. Could one properly manipulate infinite series in essentially the same ways as computations with ordinary algebraic expressions (i.e., polynomials) are carried out?
\end{quote}
But this is not needed in the case of continued fractions. Adding, subtracting etc.\ are not so easy to do in the case of continued fractions. Nevertheless, we can manipulate finite continued fractions using the usual grade school arithmetic, and this is what Ramanujan has done. 

So can we go back to a high school approach to continued fractions? This is what we have done, and we hope our treatment of Ramanujan's examples will make these intrinsically beautiful objects accessible to a wide audience. 

We should mention that most of the calculations in this paper are based on the treatment by Berndt~\cite{Berndt1989}, who has also included the proofs of many other mathematicians.  We have presented the ideas to suit our point of view. This study can be regarded as a prequel to our earlier work on Ramanujan's $q$-continued fractions~\cite{GB2014, GB2025}. An alternate view of the entries in this paper is given by Lorentzen~\cite{LL2008}. For an introduction to continued fractions, we recommend the classic Olds~\cite{Olds1963} book.

There is one notable correction in the statements of  Entries 10 and 11 (in \S\ref{sec:3}). We have followed an earlier version of these entries in \cite[Volume 1]{RamanujanNB}, in the notebooks published by the Tata Institute in 2012. This edition is based on a handwritten reproduction of Ramanujan's original notebooks, which came long after Berndt worked on \cite{Berndt1989}. Further, we have found that examining Ramanujan's earlier attempt at solving the problem considered in Entry 17 motivates the solution presented in this entry; see \S\ref{sec:4}. This entry is about an algorithm for writing a continued fraction as a (formal) power series. The remaining entries appear in \S\ref{sec:3}. We begin with some basic theorems in \S\ref{sec:2}. 

\subsection*{Notation to refer to Ramanujan's entries}
The primary sources for this paper are {\em Ramanujan's Notebooks} by  Berndt~\cite{Berndt1985, Berndt1989,Berndt1991, Berndt1994, Berndt1998}.
We use the following shorthand to refer to entries from these books. Entry m of Chapter n of Part A of Ramanujan's Notebooks is referred to as Entry A.n.m. 
So, for example, Entry II.12.7 refers to Entry 7 of Chapter 12 given in Part II of Berndt's volumes, that is, \cite{Berndt1989}. The other entries mentioned in the paragraph above this one are: II.12.10, II.12.11, and II.12.17.

\section{The basic theorem of continued fractions}\label{sec:2}
Ramanujan begins the chapter with a general theorem. While this theorem is classical, its statement and placement indicate the direction of Ramanujan's thought process. 

\begin{entry}{II.12.1}
\begin{subequations}
\begin{align}
\frac{a_1}{b_1}\fplus\frac{a_2}{b_2}\fplus \fdots\fplus\frac{a_n}{b_n}
&=
a_1\frac{N_{n-1}}{D_n}  \label{II.12.1a}\\
&=
\frac{a_1}{D_0D_1}
-\frac{a_1a_2}{D_1D_2}
+\frac{a_1a_2a_3}{D_2D_3}
-\cdots \label{II.12.1b}
\end{align}
\end{subequations}
to \(n\) terms, where, for $n\ge 2$,
\begin{subequations}\label{recurrences}
\begin{gather} 
N_{n-1}=b_nN_{n-2}+a_nN_{n-3}, \label{rec-N}\\
D_n=b_nD_{n-1}+a_nD_{n-2}; \label{rec-D} \\
\intertext{with initial values,}
N_{-1}=0, N_0=1; \label{init-N}\\
D_0=1, D_1=b_1. \label{init-D}
\end{gather}
\end{subequations}
\end{entry}
The first equality is the first thing one proves about  continued fractions, and is used to compute the convergents of the continued fraction.  Usually, the recurrence relations given in \eqref{recurrences} are written in such a manner that both the numerator and denominator satisfy the same recurrence relation with differing initial values. This can be accomplished by renaming $N_{n-1}$ to $N_n$. We will stick with Ramanujan's notation. 

The second equality \eqref{II.12.1b} follows (by telescoping) from what is called the {\em Casorati determinant}---usually the second item one proves about continued fractions. 
The fact that Ramanujan stated this in such a fashion suggests that he is interested in going from continued fractions to series and vice-versa. Further evidence for this is in the corollary that immediately follows this entry.

\begin{corentry}{II.12.1}\label{cor-II.12.a}
\begin{multline}\label{eq:cor-II.12.a}
a_1+a_2+a_3+\cdots \text{ (to $n$ terms)}\\
=
\frac{a_1}{1}
\fminus
\frac{a_2}{a_1+a_2}
\fminus
\frac{a_1a_3}{a_2+a_3}
\fminus
\frac{a_2a_4}{a_3+a_4}
\fminus
\frac{a_3a_5}{a_4+a_5}
\fminus\fdots \text{ (to $n$ terms)}. 
\end{multline}
\end{corentry}

The proof we give of Entry II.12.1 is the proof explained to me by Mourad Ismail; it is on the lines of  \cite{MI2009}.

\begin{proof}[Proof of \eqref{II.12.1a}]
We prove this by induction. Let the continued fraction be denoted $C_n$.  It is easy to verify the result for $n=1, 2$. Suppose it is true for $n\ge 2$; we show for $n+1$.
Note that
\begin{align*}
C_{n+1} &=
\frac{a_1}{b_1}\fplus\frac{a_2}{b_2}\fplus\cdots\fplus\frac{a_{n-1}}{b_{n-1}}\fplus\frac{a_n}{b_n}\fplus\frac{a_{n+1}}{b_{n+1}}\\
&= \frac{a_1}{b_1}\fplus\frac{a_2}{b_2}\fplus\cdots\fplus\frac{a_{n-1}}{b_{n-1}}\fplus\frac{a_nb_{n+1}}{b_nb_{n+1}+a_{n+1}}
\end{align*}
Now by induction, \eqref{rec-D} gives $D_{n+1}$, where now $b_n\mapsto b_nb_{n+1}+a_{n+1}$ and $a_n\mapsto a_nb_{n+1}$. Thus
\begin{align*}
D_{n+1} &= (b_nb_{n+1}+a_{n+1} )D_{n-1} + a_nb_{n+1} D_{n-2} \\
&= b_{n+1} (b_nD_{n-1} + a_nD_{n-2}) + a_{n+1}D_{n-1}\\
&= b_{n+1} D_n + a_{n+1}D_{n-1}.
\end{align*}
This verifies \eqref{rec-D} for $n+1$. 

Similarly, we can prove \eqref{rec-N} by induction.
\end{proof}
Before proving the second equality, we prove the Casorati determinant, which can be used to obtain the second equality.
\begin{proposition}
Let \(N_n\) and \(D_n\) be defined by \eqref{recurrences}. Let
\begin{subequations}\label{casorati}
\begin{equation}\label{casorati-a}
\Delta(n)
:=
N_{n-1}D_{n-1}-N_{n-2}D_n .
\end{equation}
Then, $\Delta (1)=1$, and, for \(n> 1\), 
\begin{equation}\label{casorati-b}
\Delta(n)=(-1)^{n-1}a_2a_3\cdots a_n.
\end{equation}
\end{subequations}
\end{proposition}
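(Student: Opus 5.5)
We argue by induction on \(n\), using only the recurrences \eqref{rec-N} and \eqref{rec-D} together with the initial values \eqref{init-N} and \eqref{init-D}. The plan is to show that \(\Delta(n)\) satisfies the first-order recurrence
\[
\Delta(n+1) = -a_{n+1}\,\Delta(n), \qquad n\ge 1,
\]
and then to iterate it down to \(\Delta(1)\).

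\textbf{Base case.} We have \(\Delta(1)=N_0D_0-N_{-1}D_1 = 1\cdot 1 - 0\cdot b_1 = 1\), directly from the initial values. As a sanity check, \(N_1=b_2N_0+a_2N_{-1}=b_2\) and \(D_2=b_2b_1+a_2\). Hence \(\Delta(2)=N_1D_1-N_0D_2 = b_2b_1-(b_1b_2+a_2) = -a_2\), which agrees with \eqref{casorati-b} for \(n=2\).

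\textbf{Inductive step.} By definition,
\[
\Delta(n+1)=N_nD_n-N_{n-1}D_{n+1}.
\]
We substitute \(N_n=b_{n+1}N_{n-1}+a_{n+1}N_{n-2}\) and \(D_{n+1}=b_{n+1}D_n+a_{n+1}D_{n-1}\); both are valid for \(n\ge 1\), since the recurrences hold with index \(n+1\ge 2\). Expanding gives
\[
\Delta(n+1)=b_{n+1}N_{n-1}D_n + a_{n+1}N_{n-2}D_n - b_{n+1}N_{n-1}D_n - a_{n+1}N_{n-1}D_{n-1}.
\]
The \(b_{n+1}\) terms cancel, and what remains is
\[
\Delta(n+1) = -a_{n+1}\bigl(N_{n-1}D_{n-1}-N_{n-2}D_n\bigr) = -a_{n+1}\Delta(n).
\]

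\textbf{Conclusion.} Iterating the recurrence from \(\Delta(1)=1\) yields \(\Delta(n)=(-1)^{n-1}a_2a_3\cdots a_n\) for \(n>1\), which is \eqref{casorati-b}.

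The only point that needs care is the index bookkeeping. Ramanujan's shifted numerator indexing (\(N_{n-1}\) paired with \(D_n\)) means we must make sure that both recurrences are applied with the same coefficient pair \((a_{n+1},b_{n+1})\), and that \(n=1\) is already within their range of validity. Beyond that, the argument is a routine cancellation.
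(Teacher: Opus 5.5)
Your proof is correct and is essentially the paper's argument. The paper likewise gets $\Delta(1)=1$ from the initial values, substitutes \eqref{rec-N} and \eqref{rec-D} to obtain $\Delta(n)=-a_n\Delta(n-1)$ for $n\ge 2$ (your recurrence with the index shifted by one), and iterates this down to $\Delta(1)$.
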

\begin{Remark} Note that $\Delta(n)$ is the $2\times 2$ Casorati determinant:
\[
\Delta(n) =
\begin{vmatrix}
N_{n-1} & D_n \\
N_{n-2} & D_{n-1}
\end{vmatrix}.
\]

\end{Remark}

\begin{proof}
For \(n=1\), using the initial values in \eqref{recurrences}, we get
\[
\Delta(1)
=
N_0D_0-N_{-1}D_1
=
1.
\]
For $n\geq 2$, 
we note that
$$\Delta(n) = -a_n\Delta(n-1).$$
To see this, use \eqref{rec-N} and \eqref{rec-D} to obtain
\begin{align*}
\Delta(n)
&=
N_{n-1}D_{n-1}-N_{n-2}D_n \\
&=
\bigl(b_nN_{n-2}+a_nN_{n-3}\bigr)D_{n-1}
-
N_{n-2}\bigl(b_nD_{n-1}+a_nD_{n-2}\bigr) \\
&=
-a_n\bigl(N_{n-2}D_{n-2}-N_{n-3}D_{n-1}\bigr) \\
&=
-a_n\Delta(n-1).
\end{align*}
Thus 
\[
\Delta(n)
=
(-a_n)(-a_{n-1})\cdots(-a_2)\Delta(1)
=
(-1)^{n-1}a_2a_3\cdots a_n.
\]
\end{proof}

\begin{proof}[Proof of \eqref{II.12.1b}]
Combining \eqref{casorati-a} and \eqref{casorati-b} (with \(n\) replaced by
\(k\)), dividing by \(D_{k-1}D_k\) and multiplying by \(a_1\), we obtain:
\[
a_1\left(
\frac{N_{k-1}}{D_k}
-
\frac{N_{k-2}}{D_{k-1}}
\right)
=
\frac{(-1)^{k-1}a_1a_2\cdots a_k}
     {D_{k-1}D_k}.
\]
Now sum this identity for \(k=1,2,\ldots,n\),
to obtain:
\begin{align*}
\sum_{k=1}^{n}
\frac{(-1)^{k-1}a_1a_2\cdots a_k}
     {D_{k-1}D_k}
     =
     \sum_{k=1}^{n}
a_1\left(
\frac{N_{k-1}}{D_k}
-
\frac{N_{k-2}}{D_{k-1}}
\right)
=
a_1\frac{N_{n-1}}{D_n},
\end{align*}
by telescoping.

This proves \eqref{II.12.1b}.
\end{proof}

\begin{proof}[Proof of Corollary to Entry \ref{cor-II.12.a}]
We work backwards from Entry II.12.1. Write the continued fraction in the
form
\[
\frac{A_1}{B_1}\fplus\frac{A_2}{B_2}\fplus\fdots\fplus\frac{A_n}{B_n},
\]
and let \(D_k\) be its denominators.
We want the series in \eqref{II.12.1b} to give
\[
a_1+a_2+\cdots+a_n
\]
term by term. 

The first few steps are shown in Table~\ref{tab:back-calc-cor}.  Let $t_k$ denote the $k$th term of the series \eqref{II.12.1b}. We denote by ${s}_k$ what it becomes after entering the previously found values of $A_j, D_j$, for $j<k$.  
Here
\(D_0=1\).

\begin{table}[htbp]
\centering
\renewcommand{\arraystretch}{1.8}
\[
\begin{array}{|c|c|c|c|c|c|c|}
\hline
k
& a_k
& t_k
& s_k
& A_k
& D_k
& B_k
\\ \hline
1
& a_1
& \displaystyle \frac{A_1}{D_0D_1}
& \displaystyle \frac{A_1}{D_1}
& a_1
& 1
& 1
\\
2
& a_2
& \displaystyle -\frac{A_1A_2}{D_1D_2}
& \displaystyle -\frac{a_1A_2}{D_2}
& -a_2
& a_1
& a_1+a_2
\\
3
& a_3
& \displaystyle \frac{A_1A_2A_3}{D_2D_3}
& \displaystyle -\frac{a_2A_3}{D_3}
& -a_1a_3
& a_1a_2
& a_2+a_3
\\
4
& a_4
& \displaystyle -\frac{A_1A_2A_3A_4}{D_3D_4}
& \displaystyle -\frac{a_1a_3A_4}{D_4}
& -a_2a_4
& a_1a_2a_3
& a_3+a_4
\\
5
& a_5
& \displaystyle \frac{A_1A_2A_3A_4A_5}{D_4D_5}
& \displaystyle -\frac{a_1a_2a_4A_5}{D_5}
& -a_3a_5
& a_1a_2a_3a_4
& a_4+a_5
\\
\hline
\end{array}
\]
\caption{Calculations for Corollary to II.12.1}
\label{tab:back-calc-cor}
\end{table}
The entries in the last column are obtained from \eqref{rec-D}.
For instance,
\(
D_2=B_2D_1+A_2D_0
\)
gives
\(
a_1=B_2-a_2,
\)
so \(B_2=a_1+a_2\). 

There is a change in the pattern at  \(k=3\). The choice \(A_3=-a_3\) gives \(D_3=a_2\), and we obtain
\[
a_2=B_3a_1-a_3 \implies
B_3=\frac{a_2+a_3}{a_1}.
\]
However, the choice \(A_3=-a_1a_3\) clears this denominator and gives
\(B_3=a_2+a_3\). From this point on the pattern is stable.

We find that
\begin{gather*}
D_0=1, D_1=1,  D_k=a_1a_2\cdots a_{k-1}\quad(k\geq 2);
\\
A_1=a_1, A_2=-a_2, A_k=-a_{k-2}a_k\quad(k\geq 3);\\
B_1=1, B_k=a_{k-1}+a_k\quad(k\geq 2).
\end{gather*}

Once we have the pattern, it is easy to prove it by induction.  
Given $A_k$ and $B_k$, we first verify \(D_k\) is as above. We have
\[
D_0=1,\qquad D_1=1,\qquad D_2=(a_1+a_2)-a_2=a_1.
\]
For \(k\geq 3\), \eqref{rec-D} yields
\[
D_k=(a_{k-1}+a_k)D_{k-1}-a_{k-2}a_kD_{k-2}.
\]
Assuming the pattern holds for $k-1$ and $k-2$, we see that
\begin{align*}
D_k
&=(a_{k-1}+a_k)a_1a_2\cdots a_{k-2}
-a_{k-2}a_k a_1a_2\cdots a_{k-3} \\
%&=a_1a_2\cdots a_{k-2}(a_{k-1}+a_k-a_k) \\
&=a_1a_2\cdots a_{k-1}.
\end{align*}
This shows the pattern for $D_k$. 

Note that $t_1=a_1$, $t_2=a_2$, and for $k\ge 3$:
\[
t_k=\frac{(-1)^{k-1}A_1A_2\cdots A_k}{D_{k-1}D_k}
=
\frac{
a_1a_2(a_1a_3)(a_2a_4)\cdots(a_{k-2}a_k)}
{(a_1a_2\cdots a_{k-2})(a_1a_2\cdots a_{k-1})}
=
a_k.
\]
This completes the proof.
\end{proof}

\begin{remark}[On the convergence of series in Corollary to Entry \ref{cor-II.12.a}]
The finite sum on the left-hand side is equal to the right-hand side. So the finite continued fraction on the right-hand side is the partial sum of the series 
$$\sum_i a_i.$$ The right-hand side is the $n$th convergent. 
Thus, in special cases where an infinite sum is desired,  the convergence of the continued fraction can be deduced from the convergence of the series. 

In this case, the convergence is actually the same as that of the usual convergence of continued fractions. 
\end{remark}

\section{Ramanujan's elementary continued fractions}\label{sec:3}

\subsection*{Some elementary continued fractions}
There are two types of entries in this section. The first type is on the lines of Corollary to Entry~\ref{cor-II.12.a}, where the $n$th convergent equals the $n$th partial sum of a series. Here the convergence of the continued fraction is obtained from the convergence of the series. 

The second type is an even simpler idea. Here is an example.
\[
x
=
x-a_1
+
\frac{a_1x}{x-a_2}
\fplus
\frac{a_2x}{x-a_3}
\fplus
\frac{a_{3}x}{x-a_{4}}
\fplus\fdots.
\]
Berndt has interpreted it as an infinite continued fraction
 and provided conditions under which the theorem holds as an analytic theorem. 
%
%However, Ramanujan may have just intended it to be used formally, until a pattern is found. Then, in specific cases, by taking a limiting process, we obtain an infinite continued fraction which converges to the left hand side. In our interpretation above, we have chosen to regard it as a formal identity until $n$ terms. In this case, 
%Berndt's own formal proof~\cite[p.~107]{Berndt1989} is sufficient.
The following is a direct quote (with changed equation numbering) from Berndt~\cite[p.~107]{Berndt1989} .
\begin{quote}
Quite possibly, Ramanujan attempted to prove Entry 2 by the following
nonrigorous argument. Trivially,
\begin{equation}\label{eq:II.12.2-step}
a_k
=
\frac{a_kx}{x-a_{k+1}+a_{k+1}},
\qquad k\geq 1.
\end{equation}
If we successively employ \eqref{eq:II.12.2-step} for
\(k=1,2,\ldots\), we find that
\[
a_1
=
\frac{a_1x}{x-a_2}
\fplus
\frac{a_2x}{x-a_3}
\fplus
\frac{a_3x}{x-a_4}
\fplus\cdots .
\]
Hence
\[
x
=
x-a_1
+
\frac{a_1x}{x-a_2}
\fplus
\frac{a_2x}{x-a_3}
\fplus
\frac{a_3x}{x-a_4}
\fplus\cdots .
\]
\end{quote}
We agree with Berndt's suggestion that Ramanujan may have obtained it this way. However, we do not agree that it is non-rigorous. 
Consider the following approach, that 
presents these calculations a little differently. We have interpreted this as a terminating continued fraction.
\begin{entry}{II.12.2} Let $n\ge 1$ and $x\neq 0$. Then
\begin{equation}\label{eq:II.12.2}
x
=
x-a_1
+
\frac{a_1x}{x-a_2}
\fplus
\frac{a_2x}{x-a_3}
\fplus\fdots\fplus
\frac{a_{n-1}x}{x-a_{n}}
\fplus
\frac{a_{n}x}{x-a_{n+1}+a_{n+1}}.
\end{equation}
\end{entry}
\begin{remark} We have given the second last term (with $a_{n-1}$) to show that the pattern followed by the second last term, fits the earlier terms of the continued fraction. When $n=1$, the continued fraction (what comes after the $x-a_1$) has only one term. 
%When $n=1$, we only have the last term, and $a_0:=1$.
\end{remark}

\begin{proof} Consider:
\begin{align*}
x
&=x-a_1+a_1 \\
&=x-a_1+\frac{a_1x}{x-a_2+a_2} \\
&=x-a_1+
\frac{a_1x}{x-a_2+
\dfrac{a_2x}{x-a_3+a_3}} \\
&=x-a_1+
\frac{a_1x}{x-a_2+
\dfrac{a_2x}{x-a_3+
\dfrac{a_3x}{x-a_4+a_4}}} \\
&=\cdots
%&=x-a_1+
%\frac{a_1x}{x-a_2+
%\dfrac{a_2x}{x-a_3+
%\dfrac{a_3x}{x-a_4+
%\dfrac{a_4x}{x-a_5+a_5}}}} \\
%&=x-a_1
%+
%\frac{a_1x}{x-a_2}
%\fplus
%\frac{a_2x}{x-a_3}
%\fplus
%\frac{a_3x}{x-a_4}
%\fplus
%\frac{a_4x}{x-a_5}
%\fplus\cdots .
\end{align*}
At each step, the left-hand side equals the right hand side. 
\end{proof}
It is not difficult to imagine that a bright young boy, given to algebraic manipulation, playing with the idea of continued fractions, would have thought like this.

\begin{remark}
Let $X_n$ denote the right-hand side of \eqref{eq:II.12.2}. Note that $X_n=x$ for all $n$, so is a constant sequence; thus it converges to $x$. This then gives a completely rigorous interpretation of the (infinite) continued fraction identity. Such a statement can be made for all such continued fractions below. 
\end{remark}

The following entry is of the same nature, and we treat it accordingly.

\begin{entry}{II.12.7}
If \(x\) is not a negative integer, then, for $n\ge 1$:
\begin{equation}\label{II.12.7}
1
=
\frac{x+1}{x}
\fplus
\frac{x+2}{x+1}
\fplus\fdots \fplus
\frac{x+n-1}{x+n-2}
%\fplus\cdots 
\fplus
\frac{x+n}{x+n-1 + \frac{x+n}{x+n}}
%
%\text{(to $n$ terms)}
.
\end{equation}
%Let $X_n$ denote the right-hand side. Then,
%$$\lim_{n\to\infty} X_n=1.$$
\end{entry}
\begin{proof}
%Again, Berndt has treated this analytically, and given a proof involving taking special cases of a later entry. We have interpreted it on the lines of Entry II.12.2.
We have
\begin{align*}
1
&=\frac{x+1}{x+1} \\
&=\frac{x+1}{x+\dfrac{x+2}{x+2}} \\
&=\frac{x+1}{x+\dfrac{x+2}{x+1+\dfrac{x+3}{x+3}}} \\
&=\cdots
%&=\frac{x+1}{x+\dfrac{x+2}{x+1+\dfrac{x+3}{x+2+\dfrac{x+4}{x+4}}}} \\
%&=\frac{x+1}{x+\dfrac{x+2}{x+1+\dfrac{x+3}{x+2+\dfrac{x+4}{x+3+\dfrac{x+4}{x+4}}}}}.
\end{align*}
Continuing in this manner, we see the right-hand side, denoted $X_n$, is a constant, and equal to $1$.
%taking limits as $n\to \infty$, we obtain the result.
\end{proof}
\begin{comment}
\[
1
=
\frac{x+1}{x}
\fplus
\frac{x+2}{x+1}
\fplus
\frac{x+3}{x+2}
\fplus
\frac{x+4}{x+3}
\fplus \fdots
\frac{x+n-1}{x+n-2}
\fplus
\frac{x+n}{x+n-1 + \frac{x+n}{x+n}}.
\]
\end{comment}
The special case, when $x=1$ is recorded by Ramanujan as a corollary:
\[
1
=
\frac{2}{1}
\fplus
\frac{3}{2}
\fplus
\frac{4}{3}
\fplus
\frac{5}{4}
\fplus\fdots .
\]
Again, this is an infinite continued fraction, and if one goes by the usual continued fraction meaning of convergence we have to give a proof (as given in  \cite{Berndt1989}); this  seems quite artificial in this context. We state it as a result for terminating continued fractions. 

\begin{corentry}{II.12.7}\label{II.12.7-cor} For $n\ge 1$, we have
\[
1
=
\frac{2}{1}
\fplus
\frac{3}{2}
\fplus
\frac{4}{3}
\fplus\fdots\fplus
\frac{n}{n-1}%\frac{5}{4}
\fplus 
\frac{n+1}{n + \dfrac{n+1}{n+1}}
%
%\text{(to $n$ terms)}
.
\]
\end{corentry}

Now we have an unusual theorem involving arithmetic progressions with common difference $a$. Both sides contain the arithmetic progressions of the form $x+ak$; the denominator of each term of the sum, the $a_i$'s and $b_i$'s all contain this sequence. 
\begin{entry}{II.12.8}
Let \(n\) denote a positive integer and suppose that \(x\neq -ka\),
where \(k\) is a positive integer such that \(1\leq k\leq n\). Then
\begin{multline}\label{II.12.8}
\sum_{k=1}^{n}
\frac{(-1)^{k+1}}{(x+a)(x+2a)\cdots(x+ka)}
= \\
\frac{1}{x+a}
\fplus
\frac{x+a}{x+2a-1}
\fplus
\frac{x+2a}{x+3a-1}
\fplus\fdots\fplus
\frac{x+(n-1)a}{x+na-1}.
\end{multline}
\end{entry}

\begin{proof}
This result is of the same type as the Corollary to Entry~\ref{cor-II.12.a}, and we work in the same way. We find $A_n$, $B_n$, $D_n$, and spot a pattern.
First we have to solve.
\[
\frac{1}{x+a}=\frac{A_1}{D_0D_1}.
\]
Take
\[
D_0=1, A_1=1, D_1=x+a, \text{ and } B_1=x+a.
\]
Thus the first partial quotient is
\[
\frac{A_1}{B_1}=\frac{1}{x+a}.
\]
For the second term, we want
\begin{subequations}
\begin{align}
-\frac{A_1A_2}{D_1D_2}
= -\frac{A_2}{(x+a)D_2} &=
-\frac{1}{(x+a)(x+2a)} , \label{II.12.18-step2a}
\\
D_2=B_2D_1+A_2D_0 &=B_2(x+a)+A_2 .\label{II.12.18-step2b}
\end{align}
\end{subequations}
Looking at \eqref{II.12.18-step2b}, it seems convenient to take $A_2=x+a$ to be able to factor out the term to compute $B_2$. Then, by
\eqref{II.12.18-step2a}, 
$D_2=(x+a)(x+2a)$, and
\(
B_2=x+2a-1.
\)
Thus the second partial quotient is
\[
\frac{A_2}{B_2}
=
\frac{x+a}{x+2a-1}.
\]
For the third term, we want
\begin{subequations}
\begin{align}
\frac{A_1A_2A_3}{D_2D_3}
=
 \frac{(x+a)A_3}{(x+a)(x+2a)D_3}
&=
\frac{1}{(x+a)(x+2a)(x+3a)}
\label{II.12.18-step3a}
\\
D_3=B_3D_2+A_3D_1
&=B_3(x+a)(x+2a)+A_3(x+a).
\label{II.12.18-step3b}
\end{align}
\end{subequations}
Looking at \eqref{II.12.18-step3b}, it seems convenient to take
\(A_3=x+2a\), since then
\[
A_3(x+a)=(x+2a)(x+a)=D_2,
\]
and a common factor \(D_2\) can be taken out. Then, by
\eqref{II.12.18-step3a},
\[
D_3=(x+a)(x+2a)(x+3a),
\]
and \eqref{II.12.18-step3b} gives
\(
B_3=x+3a-1.
\)
Thus the third partial quotient is
\[
\frac{A_3}{B_3}
=
\frac{x+2a}{x+3a-1}.
\]

The pattern is now clear. We want:
\[
D_n  = (x+a)(x+2a)\cdots  (x+na);\]
and
\begin{subequations}
\begin{align}
A_n &= \frac{D_{n-1}}{D_{n-2}}=x+(n-1)a; \label{12.8.An}\\
B_n & = \frac{D_{n}}{D_{n-1}}-1 = x+na-1. \label{12.8.Bn}
\end{align}
\end{subequations}
%The denominator recurrence can be written:
%$$\frac{D_n}{D_{n-1}} = B_n + \frac{A_n }{D_{n-1}/D_{n-2}},$$
%which is quite suggestive. 

We complete the proof by induction.
\end{proof}

From a continued fraction containing many arithmetic progressions, Ramanujan obtains, as a corollary, a continued fraction containing $e$. This motivates the sum on the left-hand side of Entry II.12.8. It is clear that what is required is a continued fraction where the $n$th convergent equals a partial sum of the series, because he needs to take a limit as $n\to\infty$.  
\begin{corentry}{II.12.8}\label{II.12.8-cor}
\begin{equation}\label{eq:II.12.8-cor}
\frac{1}{e-1}
=
\frac{1}{1}
\fplus
\frac{2}{2}
\fplus
\frac{3}{3}
\fplus
\cfdots .
\end{equation}
\end{corentry}
\begin{proof} We take $x=0$ and $a=1$ in \eqref{II.12.8}. Now take the limit as $n\to\infty$. The sum on the left-hand side is
$$\sum_{k=1}^\infty \frac{1}{k!}(-1)^{k+1}$$
which converges (absolutely) to $1-e^{-1}$. The right-hand side is equal to the partial sums of this series (by Entry~II.12.8), and is also the $n$th convergent of the infinite continued fraction. 
So it converges as $n\to \infty$ to $1-e^{-1}$. We obtain:
$$1-e^{-1} = \frac{1}{1}\fplus\frac{1}{1}
\fplus
\frac{2}{2}
\fplus
\frac{3}{3}
\fplus
\cfdots .
$$
The result follows by taking reciprocals and subtracting $1$. 
\end{proof}

The next continued fraction is an extension of  \eqref{II.12.7}. 
In our statement, it should be understood that only the last denominator does not follow the pattern of 
the first few terms of the continued fraction. From now on, we follow this convention for all such continued fractions.

The next entry also contains an arithmetic progression. 
\begin{entry}{II.12.9} Let $n\ge 1$. Suppose $x+ka+1\neq 0$, for $k=0, 1,2, \dots, n$ and $x+ka\neq 0$, for $k=1, 2, \dots, n.$ Then:
\begin{multline}\label{II.12.9}
\frac{x+a+1}{x+1}
=
\frac{x+a}{x-1}
\fplus
\frac{x+2a}{x+a-1}
\fplus %\fdots
\frac{x+3a}{x+2a-1}
\fplus 
\fdots\\
\fplus
%\\
% \fplus
%\frac{x+(n-1)a}
%{x+(n-2)a-1+
%\dfrac{x+(n)a+1}{x+(n-1)a+1}}\\
\frac{x+na}
{x+(n-1)a-1+
\dfrac{x+(n+1)a+1}{x+na+1}}
.
\end{multline}
%Let $X_n$ denote the right-hand side. Then,
%$$\lim_{n\to\infty} X_n=\frac{x+a+1}{x+1}.$$
\end{entry}

\begin{remark} Note that when $a=0$, Entry~\ref{II.12.9} reduces to something similar to Entry~II.12.7. Consider:
\begin{align*}
1 &=\frac{x}{x}  =\frac{x}{x-1+\dfrac{x}{x}} 
=\frac{x}{x-1+\dfrac{x}{x-1+\dfrac{x}{x}}} =\cdots.
\end{align*}
The terms in Entry~II.12.9 are suggested by
\eqref{12.8.An} and \eqref{12.8.Bn}. We interchange \(n-1\) and \(n\) in these two expressions and consider
\[
A_n=x+na,
\qquad
B_n=x+(n-1)a-1.
\]
These considerations may have motivated 
\eqref{II.12.9-idea} below, and this continued fraction.
\end{remark}

\begin{proof}
For \(k\geq1\), note that
\begin{equation}\label{II.12.9-idea}
\frac{x+ka+1}{x+(k-1)a+1}
=
\frac{x+ka}
{x+(k-1)a-1+
\dfrac{x+(k+1)a+1}{x+ka+1}}.
\end{equation}
The proof follows by taking $k=1$ and iterating.  The first three steps are:
\begin{align*}
\frac{x+a+1}{x+1}
&=
\frac{x+a}
{x-1+\dfrac{x+2a+1}{x+a+1}} \\
&=
\frac{x+a}
{x-1+
\dfrac{x+2a}
{x+a-1+\dfrac{x+3a+1}{x+2a+1}}}\\
&=
\frac{x+a}
{x-1+
\dfrac{x+2a}
{x+a-1+
\dfrac{x+3a}
{x+2a-1+\dfrac{x+4a+1}{x+3a+1}}}} &=\cdots.
\end{align*}
The pattern is clear, and we obtain \eqref{II.12.9}. 

We need $x+ka\neq 0$ for $k\geq 1$ and $x+ka+1\neq 0$ for $k\geq 0$ to avoid $0$ in the denominators. 
\begin{comment}
If the expression on the right-hand side of \eqref{II.12.9} is denoted by \(X_n\), then
\[
X_n=\frac{x+a+1}{x+1}
\]
for every \(n\), is a  is a constant sequence and
\[
\lim_{n\to\infty}X_n
=
\frac{x+a+1}{x+1}.
\]
\end{comment}
\end{proof}

\begin{Examples}
Let \(n\) be a positive integer. We have
\begin{enumerate}[(i)]
\item
\(\displaystyle
\frac{4}{3}
=
\frac{3}{1}
\fplus
\frac{4}{2}
\fplus
\frac{5}{3}
\fplus
\frac{6}{4}
\fplus\fdots\fplus
\frac{n+2}
{n+\dfrac{n+4}{n+3}}.
\)

\item
\(\displaystyle
\frac{5}{3}
=
\frac{4}{1}
\fplus
\frac{6}{3}
\fplus
\frac{8}{5}
\fplus
\frac{10}{7}
\fplus\fdots\fplus
\frac{2n+2}
{2n-1+\dfrac{2n+5}{2n+3}}.
\)
\end{enumerate}
\end{Examples}

\begin{proof}
In \eqref{II.12.9}, take \(x=2\) and \(a=1\) for part~(i), and
take \(x=2\) and \(a=2\) for part~(ii).
\end{proof}

Ramanujan has stated the above as infinite continued fractions. Some argument is needed to show convergence, if you require these results to be interpreted in the usual manner; see \cite{Berndt1989}.

We have seen two continued fractions with terms related to Arithmetic Progressions. If you wish to see continued fractions with terms related to 
Geometric Progressions (with common ratio $q$), see the $q$-continued fractions studied by the author in \cite{GB2014, GB2025}, and appearing in Berndt~\cite[Chapter 16]{Berndt1991}. These include the Rogers--Ramanujan continued fraction, which---interestingly---has its origins in the next couple of entries.

\subsection*{Two that are related to the Golden Mean}

The next two entries give some insight into the progress of Ramanujan's development of the subject, and are a result of a variation of the idea of the proof of Entry II.12.9. 

Our interpretation of the next two entries differs from that of Berndt. Berndt says~\cite[p.~116]{Berndt1989}
\begin{quote}
The interpretation of Entry 11 was made difficult because Ramanujan left most of his notation undefined.
\end{quote}
However, Ramanujan's words are clearer in the same entry in the first volume of the Notebook~\cite[Volume 1, p.~106]{RamanujanNB}. Ramanujan wrote:
\begin{quote}
\begin{enumerate}
\setcounter{enumi}{7}

\item If \(x\) is a positive integer, then
\[
x
=
\frac{1}{1-x}
\fplus
\frac{2}{2-x}
\fplus
\frac{3}{3-x}
\fplus\fdots\fplus
\frac{x}{0}
\fplus
\frac{x+1}{1}
\fplus
\frac{x+2}{2}
\fplus\fdots .
\]

\item If \(a\) is a positive integer and if
\[
\frac{N_a}{D_a}
=
\frac{n}{n-a}
\fplus
\frac{n+1}{n-a+1}
\fplus
\frac{n+2}{n-a+2}
\fplus\fdots,
\]
then
\[
\frac{N_{a+1}}{N_a}
=
n+2-a
+
\frac{a-1}{n+3-a}
\fplus
\frac{a-2}{n+4-a}
\fplus\fdots .
\]

Here we should equate the numerators and the denominators in lowest
terms. If
\(
N=\phi(n),
\)
then
\(
D=\phi(n-1).
\)
\end{enumerate}
\begin{comment}
\ \\
{Cor.}
\\
\begin{enumerate}[1.]
\item
\( \displaystyle
\frac{n^2+n+1}{n^2-n+1}
=
\frac{n}{n-3}
\fplus
\frac{n+1}{n-2}
\fplus
\frac{n+2}{n-1}
\fplus\fdots .
\)

\item
\(\displaystyle
\frac{n^3+2n+1}
{(n-1)^3+2(n-1)+1}
=
\frac{n}{n-4}
\fplus
\frac{n+1}{n-3}
\fplus
\frac{n+2}{n-2}
\fplus\fdots .
\)

\end{enumerate}
\end{comment}
\end{quote}
In the second notebook, the numbering of these entries changed to II.12.10 and II.12.11. 

We will now provide a possible approach to how Ramanujan arrived at these continued fractions, and what he may have meant by these remarks. 

We first recall the simplest continued fraction
\begin{equation}\label{gmean-cf}
\frac{1}{1}
\fplus
\frac{1}{1}
\fplus
\frac{1}{1}
\fplus\fdots.
\end{equation}
It can be generated from the recurrence
\begin{equation}\label{fib1}
F(k)=\frac{1}{1+F(k+1)},
\end{equation}
so 
$$F(1)=\frac{1}{1+F(2)} = \frac{1}{1}\fplus\frac{1}{1+F(3)} = \frac{1}{1}\fplus\frac{1}{1}\fplus\frac{1}{1+F(4)} \fdots.$$
There are multiple ways one can generalize this continued fraction. For example, one can try
$$F(k)=\frac{1}{1-x+F(k+1)},$$
but this does not give anything substantially different from \eqref{fib1}. Instead, consider:
\begin{equation}\label{II.12.10-H}
H(k)=\frac{k}{k-x+H(k+1)}.
\end{equation}
This leads to the continued fraction
\[
H(1)
=
\frac{1}{1-x}
\fplus
\frac{2}{2-x}
\fplus
\frac{3}{3-x}
\fplus\fdots.
\]
This is the first continued fraction in Ramanujan's entry (it is relabelled II.12.10). To evaluate it, we take
\[
H(k)=\frac{\phi(k)}{\phi(k-1)}.
\]
Then, from \eqref{II.12.10-H}, we obtain
\[
\frac{\phi(k)}{\phi(k-1)}
=
\frac{k}
{k-x+\dfrac{\phi(k+1)}{\phi(k)}},
\]
or,
\begin{equation}\label{II.12.10-phi}
\phi(k+1)=(x-k)\phi(k)+k\phi(k-1).
\end{equation}
Taking $k=0$, and $\phi(0)=1$, we find that
$$H(1)=\phi(1)=x.$$ 

The result we have obtained is again expressed as a terminating continued fraction. 

At present we have not explained why Ramanujan requires $x$ to be a positive integer. As of now, this is not required. In his statement, there is an $\frac{n}{0}$ in the denominator, which indicates that $x=n$. All this will be explained shortly. 

\begin{entry}{II.12.10} Let $\phi(n)$ be defined by \eqref{II.12.10-phi} with initial values $\phi(0)=1$ and $\phi(1)=x$. Then for $n\ge 1$:
\begin{equation}\label{eq:II.12.10}
x
=
\frac{1}{1-x}
\fplus
\frac{2}{2-x}
\fplus
\frac{3}{3-x}
\fplus\fdots\fplus
\frac{n}{n-x+\dfrac{\phi(n+1)}{\phi{(n)}}}.
\end{equation}
\end{entry}

\begin{remark} There are several notable features of this entry. First, it is a generalization of the simplest continued fraction, namely \eqref{gmean-cf}, that represents the Golden Mean. The celebrated Rogers--Ramanujan continued fraction is another example of this kind. We refer to Askey's remarks on the discovery of the Rogers--Ramanujan identities in \cite{Askey1989}, and the author's expanded explanation in \cite{GB2015}. Replacing $H$ by a ratio of $\phi$'s is precisely Askey's trick in \cite{Askey1989}, which he uses with similar effect.  Secondly, the continued fraction concerns a function which satisfies a three-term recurrence relation. Thirdly, this is the first entry where it appears that Ramanujan looked at the continued fraction first and then tried to find its value. 
\end{remark}

The next entry is about a shifted version of this continued fraction. %, so it seems like Ramanujan is wondering about convergence of the continued fraction. 
We relabel $x$ to $a$, and explicitly mention the dependence of $\phi(n)$ on $a$. So 
let $\phi_a(n)$ be defined by:
\begin{subequations}\label{II.12.11-phi}
\begin{gather}
\phi_a(0) :=1, \quad
\phi_a(1) :=a; \\
\phi_a(n+1)
=
(a-n)\phi_a(n)+n\phi_a(n-1).
\end{gather}
\end{subequations}

We reiterate that this is just a relabelling of $\phi(n)$ from Entry~II.12.10.
The expression $\phi_a(n)/\phi_a(n-1)$ is the tail of the continued fraction in  Entry~II.12.10 beginning
\begin{equation}\label{II.12.11-a}
\frac{\phi_a(n)}{\phi_a(n-1)} =
\frac{n}{n-a}
\fplus
\frac{n+1}{n+1-a}
\fplus
\frac{n+2}{n+2-a}
\fplus\fdots.
\end{equation}
The next entry considers $\phi_a(n)$ as a sequence indexed by $a$ (keeping $n$ constant) and provides a continued fraction for $\phi_{a+1}(n)/\phi_a(n)$.

\begin{entry}{II.12.11}
Let \(a\) be a positive integer, and define \(\phi_a(n)\) by \eqref{II.12.11-phi}.
Put
\(
N_a=\phi_a(n)\)
Then,
\begin{equation}\label{eq:II.12.11b}
\frac{N_{a+1}}{N_a}
=
n+2-a
+
\frac{a-1}{n+3-a}
\fplus
\frac{a-2}{n+4-a}
\fplus\fdots\fplus
\frac{1}{n+1}.
\end{equation}
\end{entry}
\begin{remark} Recall that the $x$  in Entry~II.12.10 has been relabelled to $a$ in Entry~II.12.11. Now it makes sense why Ramanujan wanted $x$ to be a positive integer. We note that this is a finite continued fraction. 
\end{remark}
\begin{proof} We will prove the recurrence relation
\begin{equation}\label{II.12.11-phi-a}
\phi_{a+1}(n)
=
(n+2-a)\phi_a(n)
+
(a-1)\phi_{a-1}(n).
\end{equation}
From this \eqref{eq:II.12.11b} follows by iteration.

Consider the exponential generating function of $\phi_a(n)$, defined by
\[
G_a(x)
:=
\sum_{n=0}^{\infty}\frac{ \phi_a(n)x^n}{n!}.
\]
Using \eqref{II.12.11-phi}, we find that
$$
G_a'(x)
=
aG_a(x)-xG_a'(x)+xG_a(x),
$$
or
\[
(1+x)G_a'(x)=(a+x)G_a(x).
\]
Since \(G_a(0)=1\), this differential equation can be solved to obtain
\begin{equation*}%\label{II.12.11-egf}
G_a(x)=e^x(1+x)^{a-1}.
\end{equation*}
From this expression, the following two identities are immediate:
%\begin{subequations}
\begin{gather*}
G_{a+1}(x)=(1+x)G_a(x);\\
G_a'(x)
=
G_a(x)+(a-1)G_{a-1}(x).
\end{gather*}
%\end{subequations}
The following recurrences follow by comparing coefficients.
\begin{subequations}
\begin{gather}
\label{II.12.11-shift}
\phi_{a+1}(n)
=
\phi_a(n)+n\phi_a(n-1);\\
\label{II.12.11-derivative}
\phi_a(n+1)
=
\phi_a(n)+(a-1)\phi_{a-1}(n).
\end{gather}
\end{subequations}
To obtain \eqref{II.12.11-phi-a}, we see from \eqref{II.12.11-shift} that
\begin{align*}
\phi_{a+1}(n)
&=
\phi_a(n)+n\phi_a(n-1);\\
&=\phi_a(n) + \phi_a(n+1) - (a-n)\phi_a(n) \quad\text{(using \eqref{II.12.11-phi})} \\
&= \phi_a(n) +  \big( \phi_a(n)+(a-1)\phi_{a-1}(n) \big) - (a-n)\phi_a(n),
\end{align*}
using \eqref{II.12.11-derivative}, which simplifies to \eqref{II.12.11-phi-a}.
\end{proof}

\begin{remark}
We can use \eqref{II.12.11-phi-a} to compute values of $\phi_a(n)$, provided we have some initial values. 
Since
\(
G_1(x)=e^x,
\)
we have
\(
\phi_1(n)=1.
\)
Next, taking \(a=1\) in \eqref{II.12.11-phi-a}, we obtain
\(
\phi_2(n)=(n+1)\phi_1(n)=n+1.
\)
Thus the initial values are
\[
\phi_1(n)=1,
\qquad
\phi_2(n)=n+1.
\]
\end{remark}

Ramanujan notes two corollaries of his entry. %We state finite forms.
\begin{corentry}[1]{II.12.11}
Let  \(r\) be a non-negative integer. Then
\begin{multline}\label{II.12.11-cor1} 
\frac{n^2+n+1}{n^2-n+1}
=
\frac{n}{n-3}
\fplus
\frac{n+1}{n-2}
\fplus
\frac{n+2}{n-1}
\fplus\fdots\fplus\\
\frac{n+r}
{n+r-3+
\dfrac{(n+r+1)^2+(n+r+1)+1}
      {(n+r)^2+(n+r)+1}}.
\end{multline}
\end{corentry}
\begin{corentry}[2]{II.12.11} Let  \(r\) be a non-negative integer. Then
\begin{multline}\label{II.12.11-cor2}
\frac{n^3+2n+1}
{(n-1)^3+2(n-1)+1}
=
\frac{n}{n-4}
\fplus
\frac{n+1}{n-3}
\fplus
\frac{n+2}{n-2}
\fplus\fdots\fplus\\
\frac{n+r}
{n+r-4+
\dfrac{(n+r+1)^3+2(n+r+1)+1}
      {(n+r)^3+2(n+r)+1}}.
\end{multline}
\end{corentry}

\begin{proof}
Using the initial values 
$\phi_1(n)=1,$ and $\phi_2(n)=n+1$
in \eqref{II.12.11-phi-a}, we obtain
\[
\phi_3(n)
=
n\phi_2(n)+\phi_1(n)
=
n^2+n+1.
\]
Further,
\begin{align*}
\phi_4(n)
&=
(n-1)\phi_3(n)+2\phi_2(n)\\
&=
(n-1)(n^2+n+1)+2(n+1)\\
&=
n^3+2n+1.
\end{align*}
So, we obtain:
\begin{align*}
\frac{\phi_3(n)}{\phi_3(n-1)}
&=
\frac{n^2+n+1}{n^2-n+1},\\
\intertext{and,}
\frac{\phi_4(n)}{\phi_4(n-1)}
&=
\frac{n^3+2n+1}
{(n-1)^3+2(n-1)+1}.
\end{align*}
Now writing the first $r+1$ terms of \eqref{II.12.11-a}, for $a=3, 4$, we get the results.
\end{proof}

\subsection*{Two more elementary continued fractions}
The next two entries also contain arithmetic progressions in some form or the other. Both are of the same nature as Entry II.12.2; the only difference is that the calculations involve quadratic terms (rather than linear terms). 
\begin{entry}{II.12.12}
If \(a\neq 0\) and \(x\neq -ka\), where \(k\) is a positive integer, then
\begin{equation}\label{eq:II.12.12}
1
=
\frac{x+a}{a}
\fplus
\frac{(x+a)^2-a^2}{a}
\fplus
\frac{(x+2a)^2-a^2}{a}
\fplus\fdots\fplus
\frac{(x+(n-1)a)^2-a^2}{x+na}.
\end{equation}
%Let $X_n$ denote the right-hand side. Then,
%$$\lim_{n\to\infty} X_n=1.$$

\end{entry}

\begin{proof}
Consider
\begin{align*}
1
&=
\frac{x+a}{x+a}\\
&=
\frac{x+a}
{a+\dfrac{x(x+2a)}{x+2a}}\\
&=
\frac{x+a}{a}
\fplus
\frac{(x+a)^2-a^2}{x+2a}\\
&=
\frac{x+a}{a}
\fplus
\frac{(x+a)^2-a^2}
{a+\dfrac{(x+a)(x+3a)}{x+3a}}\\
&=
\frac{x+a}{a}
\fplus
\frac{(x+a)^2-a^2}{a}
\fplus
\frac{(x+2a)^2-a^2}{x+3a}
=\cdots
%\\
%&=
%\frac{x+a}{a}
%\fplus
%\frac{(x+a)^2-a^2}{a}
%\fplus
%\frac{(x+2a)^2-a^2}
%{a+\dfrac{(x+2a)(x+4a)}{x+4a}}\\
%&=
%\frac{x+a}{a}
%\fplus
%\frac{(x+a)^2-a^2}{a}
%\fplus
%\frac{(x+2a)^2-a^2}{a}
%\fplus
%\frac{(x+3a)^2-a^2}{x+4a}\\
%&=
%\frac{x+a}{a}
%\fplus
%\frac{(x+a)^2-a^2}{a}
%\fplus
%\frac{(x+2a)^2-a^2}{a}
%\fplus
%\frac{(x+3a)^2-a^2}
%{a+\dfrac{(x+3a)(x+5a)}{x+5a}}\\
%&=
%\frac{x+a}{a}
%\fplus
%\frac{(x+a)^2-a^2}{a}
%\fplus
%\frac{(x+2a)^2-a^2}{a}
%\fplus
%\frac{(x+3a)^2-a^2}{a}
%\fplus
%\frac{(x+4a)^2-a^2}{x+5a}.
\end{align*}
At each step we use the identity:
\[
\bigl(x+(k-1)a\bigr)\bigl(x+(k+1)a\bigr)
=
(x+ka)^2-a^2.
\]
So at the \(k\)-th step, we use
\begin{multline*}
x+ka
=
a+x+(k-1)a
=
a+
\frac{\bigl(x+(k-1)a\bigr)\bigl(x+(k+1)a\bigr)}
{x+(k+1)a} \\
=
a+
\frac{(x+ka)^2-a^2}{x+(k+1)a}.
\end{multline*}
%Consequently, for every positive integer \(n\),
%So if $X_n$ denotes the right-hand side of \eqref{eq:II.12.12}, $X_n$ is a constant sequence and equals $1$ for all $n$. This completes the proof of the theorem. 
\end{proof}

\begin{entry}{II.12.13}
Let $n\ge 1$ and suppose $b+kd\neq 0$, for all $k=0, 1, 2, 3, \dots$. Then
%Let \(a\), \(b\), and \(d\) be complex numbers such that either
%5\(d\neq0\), 
%\(b\neq-kd\), where \(k\) is a nonnegative integer, and
%\[
%\Re\left(\frac{a-b}{d}\right)>0,
%\]
%or \(d\neq0\) and \(a=b\), or \(d=0\) and \(|a|<|b|\). Then, for
%\(n\geq1\),
\begin{multline}\label{eq:II.12.13}
a
=
\frac{ab}{a+b+d}
\fminus
\frac{(a+d)(b+d)}{a+b+3d} 
\fminus 
\frac{(a+2d)(b+2d)}{a+b+5d} 
\fminus\\
\fdots
\fminus
\frac{(a+(n-1)d)(b+(n-1)d)}
     {a+b+(2n-1)d}
\fminus
\frac{(a+nd)(b+nd)}
     {b+nd}.
\end{multline}
%Let \(X_n\) denote the right-hand side. Then
%\[
%\lim_{n\to\infty}X_n=a.
%\]
\end{entry}

\begin{proof}
We have
\begin{align*}
a
&=
\frac{ab}{b}\\
&=
\frac{ab}
{a+b+d-\dfrac{(a+d)(b+d)}{b+d}}
=
\frac{ab}{a+b+d}
\fminus
\frac{(a+d)(b+d)}{b+d}\\
&=
\frac{ab}{a+b+d}
\fminus
\frac{(a+d)(b+d)}
{a+b+3d-\dfrac{(a+2d)(b+2d)}{b+2d}}\\
%&=
%\frac{ab}{a+b+d}
%\fminus
%\frac{(a+d)(b+d)}{a+b+3d}
%\fminus
%\frac{(a+2d)(b+2d)}{b+2d}\\
&=
\frac{ab}{a+b+d}
\fminus
\frac{(a+d)(b+d)}{a+b+3d}
\fminus
\frac{(a+2d)(b+2d)}
{a+b+5d-\dfrac{(a+3d)(b+3d)}{b+3d}} 
%\\
%&=
%\frac{ab}{a+b+d}
%\fminus
%\frac{(a+d)(b+d)}{a+b+3d}
%\fminus
%\frac{(a+2d)(b+2d)}{a+b+5d}
%\fminus
%\frac{(a+3d)(b+3d)}{b+3d}
=\cdots .
\end{align*}

At the \(k\)-th step, we use
\begin{align*}
b+kd
&=
a+b+(2k+1)d-\bigl(a+(k+1)d\bigr)\\
&=
a+b+(2k+1)d
-
\frac{(a+(k+1)d)(b+(k+1)d)}
     {b+(k+1)d}.
\end{align*}
Iterating, we obtain \eqref{eq:II.12.13}. 

%Hence \(X_n=a\) for every \(n\ge 1\), so \((X_n)\) is a constant sequence and
%\[
%\lim_{n\to\infty}X_n=a.
%\]
\end{proof}

\subsection*{Two transformation formulas}

The next two entries are transformations of continued fractions. Since Ramanujan was an expert in series, where transformations between series abound, it would have been natural for him to consider whether continued fractions too can be transformed.

\begin{entry}{II.12.14}
If \(a_1,a_2,\ldots,a_{2n}\) and \(x\) are arbitrary complex numbers,
then
\begin{multline}\label{II.12.14}
\frac{a_1}{x}
\fplus
\frac{a_2}{1}
\fplus
\frac{a_3}{x}
\fplus
\frac{a_4}{1}
\fplus\fdots\fplus
\frac{a_{2n}}{1}
\\
=
\frac{a_1}{x+a_2}
\fminus
\frac{a_2a_3}{x+a_3+a_4}
\fminus
\frac{a_4a_5}{x+a_5+a_6}
\fminus\fdots\fminus
\frac{a_{2n-2}a_{2n-1}}
{x+a_{2n-1}+a_{2n}}.
\end{multline}
\end{entry}

\begin{proof}
Beginning with the continued fraction on the left-hand side of
\eqref{II.12.14}, we write
\begin{align*}
&\frac{a_1}{x}
\fplus
\frac{a_2}{1}
\fplus
\frac{a_3}{x}
\fplus
\frac{a_4}{1}
\fplus\fdots\\
&=
\frac{a_1}
{x+a_2-a_2+
\dfrac{a_2}
{1+\dfrac{a_3}
{x+\dfrac{a_4}{1+\fdots}}}}\\
&=
\frac{a_1}
{x+a_2+
a_2\left(
-1+
\frac{1}
{1+\dfrac{a_3}
{x+\dfrac{a_4}{1+\fdots}}}
\right)}\\
&=
\frac{a_1}
{x+a_2-
\dfrac{a_2a_3}
{x+a_3+\dfrac{a_4}{1+\fdots}}}.
\end{align*}

Applying the same step to the continued fraction beginning with
\(a_4\), we obtain
\begin{align*}
&\frac{a_1}{x+a_2}
\fminus
\frac{a_2a_3}
{x+a_3+\dfrac{a_4}{1+\dfrac{a_5}{x+\cdots}}}\\
&=
\frac{a_1}{x+a_2}
\fminus
\frac{a_2a_3}
{x+a_3+a_4-
\dfrac{a_4a_5}
{x+a_5+\dfrac{a_6}{1+\cdots}}}
.
\end{align*}
Continuing in this manner, we obtain \eqref{II.12.14}.
\end{proof}
\begin{remark} The continued fraction on the right-hand side of Entry~II.12.14 is the {\bf even part} of the continued fraction on the left. The continued fraction in
Andrews and Berndt~\cite[Entry~6.4.3, p.~162]{AB2005} is a $q$-continued fraction of this nature, and can be proved similarly. 
\end{remark}

\begin{comment}
Ramanujan stated the next entry as a transformation formula for infinite continued fractions.
\begin{equation*}%\label{II.12.15-o}
\frac{a_1+h}{1}
\fplus
\frac{a_1}{x}
\fplus
\frac{a_2+h}{1}
\fplus
\frac{a_2}{x}
\fplus\fdots
=
h+
\frac{a_1}{1}
\fplus
\frac{a_1+h}{x}
\fplus
\frac{a_2}{1}
\fplus
\frac{a_2+h}{x}
\fplus\fdots.
\end{equation*}
\end{comment}
The next entry is of the same nature as Entry~II.12.14. The proof is along the same lines.
\begin{entry}{II.12.15}
Let \(n\) be a positive integer. Then:
\begin{multline}\label{II.12.15}
\frac{a_1+h}{1}
\fplus
\frac{a_1}{x}
\fplus
\frac{a_2+h}{1}
\fplus
\frac{a_2}{x}
\fplus\fdots\fplus
\frac{a_n+h}{1}
\fplus
\frac{a_n}{x}
\\
=
h+
\frac{a_1}{1}
\fplus
\frac{a_1+h}{x}
\fplus
\frac{a_2}{1}
\fplus
\frac{a_2+h}{x}
\fplus\fdots\fplus
\frac{a_n}{1}
\fplus
\frac{a_n+h}{x-h}.
\end{multline}
\end{entry}

\begin{proof} Note that
\begin{align*}
\frac{a_1+h}{1}\fplus\frac{a_1}{x}
&=
\frac{a_1+h}{1+\dfrac{a_1}{x}}-h+h
\\
&=
h+
\frac{a_1+h-h\left(1+\dfrac{a_1}{x}\right)}
{1+\dfrac{a_1}{x}}
\\
&=
h+
\frac{a_1-\dfrac{a_1h}{x}}
{1+\dfrac{a_1}{x}}
\\
&=
h+
\frac{a_1(x-h)}{x+a_1}
\\
&=
h+
\frac{a_1(x-h)}
{x-h+a_1+h}
\\
&=
h+
\frac{a_1}
{1+\dfrac{a_1+h}{x-h}}
=
h+\frac{a_1}{1}\fplus\frac{a_1+h}{x-h}.
\end{align*}
Next consider
\begin{align*}
\frac{a_1+h}{1}
\fplus
\frac{a_1}{x}
\fplus
\frac{a_2+h}{1}
\fplus
\frac{a_2}{x}
&\equiv\frac{a_1+h}{1}\fplus\frac{a_1}{R},
\end{align*}
where 
\[
R=x+\frac{a_2+h}{1+\dfrac{a_2}{x}}.
\]
By the same steps as above, we find that this equals:
\begin{align*}
\frac{a_1+h}{1+\dfrac{a_1}{R}}
&=
h+
\frac{a_1}
{1+\dfrac{a_1+h}{R-h}} \\
&=h+
\frac{a_1}
{1+\dfrac{a_1+h}
{x+\dfrac{a_2+h}{1+\dfrac{a_2}{x}}-h} } \\
&=
h+
\frac{a_1}
{1+\dfrac{a_1+h}
{x+\dfrac{a_2}{1+\dfrac{a_2+h}{x-h}}}}
.
\end{align*}
At each step we use:
\begin{equation}\label{II.12.15-step}
\dfrac{A+h}{1+\dfrac{A}{R}}-h
=
\dfrac{A}{1+\dfrac{A+h}{R-h}}.
\end{equation}

To prove \eqref{II.12.15}, we begin with the left-hand side, and apply \eqref{II.12.15-step} repeatedly until the right-hand side is obtained. In the first step, we need to add and subtract $h$.
\end{proof}

\subsection*{One more where the partial sum equals the convergent}
In this final entry of this section, we see a sum where the denominator is quadratic in the index $k$. 
\begin{entry}{II.12.16}
Let \(N\) be a positive integer, and suppose that
\(m, n \neq -k\) for $k=1, 2, \dots, N$. 
Then
\begin{multline}\label{II.12.16}
\sum_{k=1}^{N}
\frac{(-1)^{k+1}}{(m+k)(n+k)}
=
\frac{1}{(m+1)(n+1)}
\fplus
\frac{(m+1)^2(n+1)^2}{m+n+3}
\\
\fplus
\frac{(m+2)^2(n+2)^2}{m+n+5}
\fplus\fdots\fplus
\frac{(m+N-1)^2(n+N-1)^2}
{m+n+2N-1}.
\end{multline}
The result remains valid if $N$ is replaced by $\infty$, provided $m, n \neq - k$, for all $k\in \mathbb N$. 
\end{entry}
\begin{remark} The continued fraction on the right-hand side of \eqref{II.12.16} is equal to the partial sum of a convergent series. So when $N\to\infty$, it converges, and converges to the infinite series obtained by taking $N\to\infty$ on the left-hand side. 
\end{remark}

\begin{proof}
This result is of the same type as Entry II.12.8 given in \eqref{II.12.8}, 
and we prove it in the same manner, by computing the right-hand side. Put
\(
P_k=(m+k)(n+k).
\)
We determine \(A_k\), \(B_k\), and \(D_k\), and then identify the pattern.

For the first term, we require
\[
\frac{1}{P_1}=\frac{A_1}{D_0D_1}.
\]
We take:
\(
D_0=1, A_1=1,D_1=P_1, \text{ and } B_1=P_1.
\)
Thus the first partial quotient is
\[
\frac{A_1}{B_1}
=
\frac{1}{(m+1)(n+1)}.
\]

For the second term, we require
\begin{subequations}
\begin{align}
-\frac{A_1A_2}{D_1D_2}
=
-\frac{A_2}{P_1D_2}
&=
-\frac{1}{P_2},
\label{II.12.16-step2a}\\
D_2=B_2D_1+A_2D_0
&=
B_2P_1+A_2.
\label{II.12.16-step2b}
\end{align}
\end{subequations}
It is convenient to take
\(
A_2=P_1^2,
\)
so \eqref{II.12.16-step2a} gives
\[
D_2=P_1P_2,
\]
and \eqref{II.12.16-step2b} gives
\[
B_2=P_2-P_1=m+n+3.
\]
Hence the second partial quotient is
\[
\frac{A_2}{B_2}
=
\frac{(m+1)^2(n+1)^2}{m+n+3}.
\]

For the third term, we require
\begin{subequations}
\begin{align}
\frac{A_1A_2A_3}{D_2D_3}
=
\frac{P_1^2A_3}{P_1P_2D_3}
&=
\frac{1}{P_3},
\label{II.12.16-step3a}\\
D_3=B_3D_2+A_3D_1
&=
B_3P_1P_2+A_3P_1.
\label{II.12.16-step3b}
\end{align}
\end{subequations}
It is convenient to take
\[
A_3=P_2^2,
\]
since then
\[
A_3D_1=P_2^2P_1=P_2D_2.
\]
Equation \eqref{II.12.16-step3a} gives
\[
D_3=P_1P_2P_3,
\]
and \eqref{II.12.16-step3b} gives
\[
B_3=P_3-P_2=m+n+5.
\]
Thus the third partial quotient is
\[
\frac{A_3}{B_3}
=
\frac{(m+2)^2(n+2)^2}{m+n+5}.
\]

The pattern is now clear. We take
\[
D_k=P_1P_2\cdots P_k
=
\prod_{j=1}^{k}(m+j)(n+j),
\]
and, for \(k\geq2\),
%\begin{subequations}
\begin{align*}
A_k
&=
\left(\frac{D_{k-1}}{D_{k-2}}\right)^2
=
P_{k-1}^2
=
(m+k-1)^2(n+k-1)^2,
%\label{II.12.16-Ak}
\\
B_k
&=
\frac{D_k}{D_{k-1}}
-
\frac{A_kD_{k-2}}{D_{k-1}}
=
P_k-P_{k-1}
=
m+n+2k-1.
%\label{II.12.16-Bk}
\end{align*}
%\end{subequations}
We complete the proof by induction.

\begin{comment}
Indeed,
\[
B_kD_{k-1}+A_kD_{k-2}
=
(P_k-P_{k-1})D_{k-1}
+
P_{k-1}D_{k-1}
=
P_kD_{k-1}
=
D_k.
\]
Moreover,
\[
A_1A_2\cdots A_k
=
P_1^2P_2^2\cdots P_{k-1}^2
=
D_{k-1}^2,
\]
and hence
\[
\frac{A_1A_2\cdots A_k}{D_{k-1}D_k}
=
\frac{D_{k-1}}{D_k}
=
\frac{1}{P_k}
=
\frac{1}{(m+k)(n+k)}.
\]
The result now follows from \eqref{II.12.1b}.
\end{comment}
\end{proof}

\section{From the continued fraction to a power series}\label{sec:4}
So far, Ramanujan has written many entries where the $n$th convergent of a  continued fraction is equal to the $n$th partial sum of a series. 
By contrast, in the entries of this section, Ramanujan expresses a continued fraction as a power series, where the partial sums may only be equal to some degree. Even so, in treating his example, we have indicated how we could use his ideas to determine the first few terms of a continued fraction from a given series. 

An attempt at the computation is given in Chapter 14 of Volume 1 of the Notebook~\cite[Volume 1, p.~107, Entry 15]{RamanujanNB}. Berndt~\cite[p.~412]{Berndt1994} says this is equivalent to Entry II.12.17, but perhaps it is better to consider it to be a preliminary version---perhaps an unsatisfying calculation replaced by a new entry when Ramanujan revised his notebook. The computation uses Entry~II.12.1.

The computations are in the formal power series ring $\mathbb{C}[[x]]$, where the coefficient of $x^k$, for each $k\in \mathbb N_0$, is computed by a finite number of arithmetic computations. 

\begin{theorem}Suppose
\begin{equation*}
\frac{a_1x}{b_1}
\fplus
\frac{a_2x}{b_2}
\fplus
\frac{a_3x}{b_3}
\fplus\fdots
=
T_1x-T_2x^2+T_3x^3-\cdots .
\end{equation*}
Let
\[
\frac{P_n}{b_n}
=
\frac{a_1a_2\cdots a_n}
{(b_1b_2\cdots b_n)^2},
\qquad
T_n-P_n=t_n.
\]
Then
%\begin{subequations}
\begin{gather*}
t_1=0,\quad
t_2=0,\\
T_1t_3-T_2^2=0,\\
T_2t_4-T_3^2=0,\\
T_3t_5-T_4^2
=
\frac{M^2}{P_1P_3},
\qquad
M=T_2T_4-T_3^2,\\
\intertext{and}
T_4t_6-T_5^2
=
\frac{N^2}{P_2P_4},
\quad
N=T_3T_5-T_4^2.
\end{gather*}
%\end{subequations}
\end{theorem}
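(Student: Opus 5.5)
The plan is to work in $\mathbb{C}[[x]]$ and expand the right-hand side of \eqref{II.12.1b}, with $a_k$ replaced by $a_kx$, term by term. The denominators become polynomials in $x$:
\[
D_0=1,\quad D_1=b_1,\quad D_k=b_kD_{k-1}+a_kxD_{k-2}.
\]
Hence $D_k=b_1b_2\cdots b_k\,(1+\delta_k x+\epsilon_k x^2+\cdots)$, where $\delta_k$ and $\epsilon_k$ are explicit sums of the quantities $a_j/(b_{j-1}b_j)$ and their products. The $k$th term of \eqref{II.12.1b} is
\[
\frac{(-1)^{k-1}a_1\cdots a_kx^k}{D_{k-1}D_k}.
\]
It starts at $x^k$, and its leading coefficient is $a_1\cdots a_k/\bigl((b_1\cdots b_{k-1})^2b_k\bigr)=P_k$. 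This is exactly how the paper's $P_k$ is defined, so $t_n=T_n-P_n$ collects only the corrections to the $x^n$ coefficient coming from the earlier terms $k<n$.

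First I would dispose of $t_1$ and $t_2$. The first term is $a_1x/b_1$ exactly, since $D_0$ and $D_1$ are constants, so $t_1=0$. The $x^2$ coefficient receives nothing from the first term, so $t_2=0$. Next, for $t_3$, only the second term contributes a correction. Expanding
\[
\frac{-a_1a_2x^2}{b_1(b_1b_2+a_2x)}
\]
gives $t_3=a_1a_2^2/(b_1^3b_2^2)$. Multiplying by $T_1=a_1/b_1$ gives $T_2^2=P_2^2$, which is the third identity. The fourth identity is the same computation shifted by one. Now $t_4$ gets contributions from the second-order expansion of the second term and the first-order expansion of the third term. I would verify $T_2t_4=T_3^2$ using the already known $T_3=P_3+t_3$.

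For the last two identities I would switch to a cleaner viewpoint rather than brute-force expansion. The $2\times2$ Hankel minors $M=T_2T_4-T_3^2$ and $N=T_3T_5-T_4^2$ should factor as monomials in the $a_i$ and $b_i$, in analogy with the Casorati determinant of the Proposition above. Concretely, the first two identities say that the minors $T_1t_3-T_2^2$ and $T_2t_4-T_3^2$ vanish at the first stage where they could be nonzero. Therefore $M$ should equal, up to sign, a product like $P_2P_4\cdot(\text{ratio})$. I would first compute $M$ explicitly from the expansions of $T_2,T_3,T_4$ and guess its closed form, say $M^2=P_1P_3\,(T_3t_5-T_4^2)$. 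I would then verify this by computing $t_5$, which needs the third-, second- and first-order expansions of terms $2,3,4$. The identity for $N$ would follow by the same computation with every index shifted by one. An alternative is to observe that the formulas are invariant under passing to the tail continued fraction, which starts at $a_2x/b_2$, after suitable renormalization.

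The main obstacle is the bookkeeping for $t_5$ and $t_6$. It requires $D_k$ to second or third order in $x$, and the cancellation into a perfect square $M^2/(P_1P_3)$ is far from obvious termwise. I would tame this by normalizing first. Dividing through by the equivalence transformation that makes every $b_i=1$ replaces $a_k$ by $c_k=a_k/(b_{k-1}b_k)$, with $b_0=1$, so that $P_k=b_1^{-1}c_1\cdots c_k$. In these variables the expansions are classical Stieltjes-type polynomials in the $c_k$, and $M$ and $N$ should come out as $\pm c_1c_2^2c_3\cdots$-type monomials. This makes the squared identities a short check.
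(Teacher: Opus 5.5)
Your treatment of $t_1,\dots,t_4$ is the paper's argument: expand the series of Entry II.12.1 with $a_k\mapsto a_kx$, read off $P_k$ as the leading coefficient of the $k$th summand, and collect the corrections from earlier summands into $t_n$. The paper, too, only asserts that $t_5,t_6$ ``can be verified'' the same way.

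The gap is in the shortcuts you propose for those last two relations. After your normalization to $b_i=1$ you get $P_k=c_1\cdots c_k$; there is no factor $b_1^{-1}$. One then finds
\[
M=c_1^2c_2^2c_3c_4=P_2P_4,\qquad N=c_1^2c_2^2c_3c_4\,(c_2c_4+c_2c_5+c_3c_5).
\]
So only $M$ is a monomial, and the $t_6$ relation is not a short check of monomials.

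Nor does the $N$-identity follow from the $M$-identity by shifting indices or by passing to the tail. The tail's coefficients are not rescaled $T_2,T_3,\dots$: with $b_i=1$ you have $T_3/a_1=a_2(a_2+a_3)$, while the tail's second coefficient is $a_2a_3$.

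Also, ``guess $M^2=P_1P_3(T_3t_5-T_4^2)$'' is just the target statement. The actual closed form, $M=T_2T_4-T_3^2=T_2P_4=P_2P_4$, is immediate from the fourth relation.

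Your Hankel-minor instinct works once it is aimed at the right determinants. Put $\Delta_j=T_{j+1}T_{j+3}-T_{j+2}^2$. The third and fourth relations say $\Delta_0=P_1P_3$ and $\Delta_1=M=P_2P_4$.

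Since $T_3t_5-T_4^2=\Delta_2-T_3P_5$, the fifth relation is equivalent to $\Delta_0\Delta_2-\Delta_1^2=T_3P_1P_3P_5$. By the Desnanot--Jacobi identity applied to the $3\times 3$ Hankel matrix $(T_{i+j+1})_{0\le i,j\le 2}$, whose central entry is $T_3$, the left side equals $T_3\det(T_{i+j+1})_{0\le i,j\le 2}$. So the fifth relation is exactly $\det(T_{i+j+1})_{0\le i,j\le 2}=P_1P_3P_5$.

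In the same way, the sixth relation is exactly $\det(T_{i+j+2})_{0\le i,j\le 2}=P_2P_4P_6$. It is these $3\times3$ Hankel determinants that are monomials: in the $c$-variables they are $c_1^3c_2^2c_3^2c_4c_5$ and $c_1^3c_2^3c_3^2c_4^2c_5c_6$, the classical Stieltjes evaluations.

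You need to either prove those two evaluations or carry out the full expansion through $x^6$. The shortcuts you list do not replace that work.
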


\begin{proof} We first motivate $P_n$. 
Note that from \eqref{II.12.1b}, we obtain
\begin{multline*}
\frac{a_1x}{b_1}
\fplus
\frac{a_2x}{b_2}
\fplus
\frac{a_3x}{b_3}
\fplus\fdots
\frac{a_nx}{b_n}
%&=
%T_1x-T_2x^2+T_3x^3-\cdots \\
%&
\\
=
\frac{a_1x}{D_0D_1}
-\frac{a_1a_2x^2}{D_1D_2}
+\frac{a_1a_2a_3x^3}{D_2D_3}
-\cdots \qquad \text{(to $n$ terms)},
\end{multline*}
where 
\[
D_0=1,\qquad D_1=b_1,
\qquad
D_n=b_nD_{n-1}+a_nxD_{n-2}.
\]
For example,  $D_2 = b_1b_2 + a_2x$, $D_3=b_1b_2b_3 + (a_2b_3+a_3b_1)x$, and 
\[
D_4
=
b_1b_2b_3b_4
+\bigl(a_2b_3b_4+a_3b_1b_4+a_4b_1b_2\bigr)x
+a_2a_4x^2.
\]
Evidently, $D_{2k}$ and $D_{2k+1}$ are polynomials of degree at most $k$. This follows from the recurrence relation satisfied by $D_n$. Note also that setting $x=0$ in the recurrence for $D_n$, we obtain: $D_n(0)=b_nD_{n-1}(0)$, so $D_n(0)=b_1b_2\cdots b_n$.

Further observe that according to Ramanujan's entry, the $n$th convergent of the continued fraction is equal to the sum up to $n$ terms in \eqref{II.12.1b}. 

Now on to computing $T_n$. Since $D_0D_1=b_1$, a constant, and the second term has at-least a power of $2$, $T_1=a_1/b_1$. Similarly,
the second term can be computed from
\begin{multline}\label{eq:D_2}
-\frac{a_1a_2x^2}{D_1D_2} =-\frac{a_1a_2}{b_1(b_1b_2 + a_2x)}x^2
%=-\frac{a_1a_2}{b_1^2b_2}x^2\frac{1}{1+{a_2x}/{b_1b_2}} 
=-\frac{a_1a_2}{b_1^2b_2} x^2 \Big( 1-\frac{a_2}{b_1b_2}x+\frac{a_2^2}{b_1^2b_2^2}x^2
\cdots\Big)
\end{multline}
Clearly,
$T_2={a_1a_2}/{b_1^2b_2}$, since every other term is of degree $3$ and more. 
Similarly, when $n\ge 1$, it is evident that there is a term of degree $n$ from
%the \(n\)-th summand begins with
\[
(-1)^{n-1}
\frac{a_1a_2\cdots a_n}
{D_{n-1}(0)D_n(0)} x^n
=
(-1)^{n-1}
\frac{a_1a_2\cdots a_n}
{b_1^2b_2^2\cdots b_{n-1}^2b_n}x^n.
\]
This motivates Ramanujan's definition
\[
\frac{P_n}{b_n}
=
\frac{a_1a_2\cdots a_n}
{(b_1b_2\cdots b_n)^2}.
\]

But there are additional contributions to the coefficient of $x^n$, coming from earlier terms in the sum  \eqref{II.12.1b},  so Ramanujan defines the error $t_n$ by $T_n=P_n+t_n$, and computes $t_n$ for small values of $n$. We illustrate how this can be done in a couple of ways. 

For example, from \eqref{eq:D_2}, the contribution to the coefficient of \(x^3\) is
\[
t_3
=
\frac{a_1a_2^2}{b_1^3b_2^2}
=
\frac{T_2^2}{T_1},
\]
which implies
\(
T_1t_3-T_2^2=0.
\)
We can continue in the same manner for $t_4$. Here contributions to $T_4$ come from $P_4$, and from both \eqref{eq:D_2} and the term ${a_1a_2a_3x^3}/{D_2D_3}$.

Here is an alternative approach to compute $t_4$. Let $C_4$ denote the continued fraction up to the term $a_4x/b_4$. 
\begin{align*}
C_4
&=
\frac{a_1x}{b_1}
\fplus
\frac{a_2x}{b_2}
\fplus
\frac{a_3x}{b_3}
\fplus
\frac{a_4x}{b_4}
\\
&=
\frac{a_1x}
{b_1+
\dfrac{a_2x}
{b_2+
\dfrac{a_3x}
{b_3\left(1+\dfrac{a_4}{b_3b_4}x\right)}}}
\\
&=
\frac{a_1x}
{b_1+
\dfrac{a_2x}
{b_2+
\dfrac{a_3x}{b_3}
\left(
1-\frac{a_4}{b_3b_4}x+\cdots
\right)}}
\\
&=
\frac{a_1x}
{b_1+
\dfrac{a_2x}{b_2}
\left[
1-\frac{a_3}{b_2b_3}x
+
\left(
\frac{a_3^2}{b_2^2b_3^2}
+
\frac{a_3a_4}{b_2b_3^2b_4}
\right)x^2
+\cdots
\right]}
\\
&=
\frac{a_1}{b_1}x
\Bigg[
1
-\frac{a_2}{b_1b_2}x
+
\left(
\frac{a_2a_3}{b_1b_2^2b_3}
+
\frac{a_2^2}{b_1^2b_2^2}
\right)x^2
\\
&\qquad\qquad
-
\left(
\frac{a_2a_3a_4}{b_1b_2^2b_3^2b_4}
+
\frac{a_2a_3^2}{b_1b_2^3b_3^2}
+
\frac{2a_2^2a_3}{b_1^2b_2^3b_3}
+
\frac{a_2^3}{b_1^3b_2^3}
\right)x^3
+\cdots
\Bigg]
\\
&=
T_1x-T_2x^2+T_3x^3-(P_4+t_4)x^4+\cdots.
\end{align*}
One can verify that $T_2t_4-T_3^2=0$, and similarly verify the relations for $t_5$ and $t_6$ given by Ramanujan. 
\end{proof}

To summarize, Ramanujan used \eqref{II.12.1b} in the attempt to compute the power series. An important role was played by $P_n$.

In the next attempt, we take $b_k=1$ and begin with the continued fraction
$$\frac{1}{1}
\fplus
\frac{a_1x}{1}
\fplus
\frac{a_2x}{1}
\fplus\fdots
$$
so that the power series is of the form
$$1-A_1x+A_2x^2-A_3x^3+\cdots.$$
The objective is to compute the coefficients \(A_k\) in
\[
F(x)
=
\frac{1}{1}
\fplus
\frac{a_1x}{1}
\fplus
\frac{a_2x}{1}
\fplus\fdots
=
\sum_{k=0}^{\infty}(-1)^kA_kx^k,
\qquad A_0=1.
\]
This is regarded as an identity of formal power series in $\mathbb C[[x]]$.

Following Ramanujan’s notation in \eqref{II.12.1a}, let \(N_{n-1}/D_n\) denote the \(n\)th convergent.  Here \(a_0\) is \(1\). Thus
\begin{subequations}
\begin{equation}\label{ND-initial}
%N_{-1}=0, 
N_0=N_1=1,\qquad D_0=D_1=1,
\end{equation}
and, for $k\ge 2$,
\begin{equation}\label{ND-recur}
N_k=N_{k-1}+a_kxN_{k-2},
\quad
D_k=D_{k-1}+a_{k-1}xD_{k-2}.
\end{equation}
\end{subequations}
We also have
\begin{multline*}%\label{II.12.1-k-ab}
\frac{N_{k}}{D_{k+1}}=\frac{1}{1}
\fplus
\frac{a_1x}{1}
%\fplus
%\frac{a_2x}{1}
\fplus\fdots\fplus
\frac{a_kx}{1} \\
=
1-\frac{a_1x}{D_1D_2}
+\frac{a_1a_2x^2}{D_2D_3}
+\cdots
+(-1)^k\frac{a_1a_2\cdots a_kx^k}{D_kD_{k+1}}.
\end{multline*}
Note that to compute $A_1, \dots, A_n$, only the terms up to $x^n$ are required. The computation is inductive. We know $A_0=1$, and assume $A_1, A_2, \dots, A_{n-1}$ are known, and compute $A_n$. This is done by computing the coefficient of $x^n$ in $D_{n-1}F(x)$ in two ways. 

Consider, for $n\ge 2$,
\begin{multline*}
F-\frac{N_{n-2}}{D_{n-1}}
=(-1)^{n-1}
\frac{a_1\cdots a_{n-1}x^{n-1}}{D_{n-1}D_n}
+(-1)^n
\frac{a_1\cdots a_nx^n}{D_nD_{n+1}}
+O(x^{n+1}),
\end{multline*}
or,
\begin{multline}\label{DF-N}
D_{n-1}F-N_{n-2}
=(-1)^{n-1}
\frac{a_1\cdots a_{n-1}x^{n-1}}{D_n} \\
+(-1)^n
\frac{a_1\cdots a_nx^n D_{n-1}}{D_nD_{n+1}}
+O(x^{n+1}).
\end{multline}
Note that 
$$
\frac{1}{D_n}=1-(a_1+\cdots+a_{n-1})x+O(x^2).$$
This follows immediately from the recurrence relation for $D_n$. To find the coefficient of $x^n$ from the second term, we require the constant term of $D_{n-1}/(D_nD_{n+1})$, which is $1$. 
Thus the coefficient of $x^n$ in  \eqref{DF-N} is:
\begin{multline}\label{Pn-motive}
(-1)^{n} \big( a_1\cdots a_{n-1} (a_1+\cdots+a_{n-1}) + a_1\cdots a_n\big) \\
=
(-1)^{n}  a_1\cdots a_{n-1} \big(a_1+\cdots+a_{n}\big) =: (-1)^n P_n.
\end{multline}
This motivates the definition of $P_n$ in the following entry, which gives a recursive method to compute $A_n$. 
\begin{entry}{II.12.17}
Write
\begin{subequations}
\begin{equation}\label{II.12.17a}
\frac{1}{1}
\fplus
\frac{a_1x}{1}
\fplus
\frac{a_2x}{1}
\fplus
\frac{a_3x}{1}
\fplus\fdots
=
\sum_{k=0}^{\infty} A_k(-x)^k,
\end{equation}
where \(A_0=1\). Let
\begin{equation}\label{def-Pn}
P_n
:=
a_1a_2\cdots a_{n-1}
(a_1+a_2+\cdots+a_n),
\qquad n\geq1.
\end{equation}
Then
\begin{align*}
P_1&=A_1,\\
P_2&=A_2,\\
P_3&=A_3-a_1A_2,\\
P_4&=A_4-(a_1+a_2)A_3,\\
P_5&=A_5-(a_1+a_2+a_3)A_4+a_1a_3A_3,\\
P_6&=A_6-(a_1+a_2+a_3+a_4)A_5
 +(a_1a_3+a_2a_4+a_1a_4)A_4.
\end{align*}
In general, for \(n\geq1\),
\begin{equation}\label{II.12.17b}
P_n
=
\sum_{0\leq k<n/2}
(-1)^k\phi_k(n)A_{n-k},
\end{equation}
where \(\phi_0(n)=1\), and, for \(r\geq1\), \(\phi_r(n)\) is defined
recursively by
\begin{equation}\label{II.12.17c}
\phi_r(n+1)-\phi_r(n)
=
a_{n-1}\phi_{r-1}(n-1).
\end{equation}
\end{subequations}
Here we take $\phi_k(n)=0$, unless $0\le k< n/2$. 
\end{entry}

\begin{proof}
Let $F(x)$, $N_k$, $D_k$ be as above. 
We use the notation $[x^n] G(x)$ to denote the coefficient of $x^n$ of the formal power series $G(x)$. The definition for $P_n$ is motivated by
\eqref{Pn-motive}.

It is easy to verify \eqref{II.12.17b} for $n=1$, that is, $P_1=A_1$. Since
\[
\frac{N_1}{D_2} = \frac{1}{1+a_1 x} = 1 - a_1x + O(x^2),
\]
comparison with \(F(x)=1-A_1x+O(x^2)\) gives
\[
A_1=a_1=P_1.
\]

For  $n\ge 2$, we have seen that $[x^n] (D_{n-1}F-N_{n-2})$ is $(-1)^nP_n$. 

We first note that the degree of $N_{n-2}$ is less than $n$, for $n\ge 2$. 
This follows immediately from the recurrence relations of $N_{n-1}$. Thus, $N_{n-2}$ does not contribute anything to the coefficient of $x^n$, and so
$$[x^n] D_{n-1}F(x) = (-1)^nP_n.$$

To prove \eqref{II.12.17b}, we compute the same coefficient in another way. 

Define $\phi_k(n)$ for $n\ge 1$, from
$$D_{n-1} = \sum_{k} \phi_{k}(n) x^k,$$
where the coefficients $\phi_{k}(n)$ are taken to be $0$ when $k<0$ or when $k>\deg D_{n-1}$. Note that $\phi_0(n)=1$, and,
\[
\deg D_{n-1}\le\left\lfloor\frac{n-1}{2}\right\rfloor < \frac{n}{2}.
\]

For $n>1$, we have
\[
[x^n]\bigl(D_{n-1}F(x)\bigr)
=
\sum_{0\le k<n/2}
\phi_k(n)(-1)^{n-k}A_{n-k}.
\]
Equating this with \((-1)^nP_n\), we obtain \eqref{II.12.17b}.

Finally, the recurrence for $D_n$ in \eqref{ND-recur} implies, on comparing coefficients of $x^k$, that  for \(n\ge2\) and \(k\ge1\),
\[
\phi_k(n+1)-\phi_k(n)
=a_{n-1}\phi_{k-1}(n-1).
\]
Note the initial condition $\phi_0(n)=1$, and the boundary conditions $\phi_k(n)=0$ unless $k<n/2$. With these conditions, this recurrence works for $n=1$ too, and determines $\phi_k(n)$ for $n\ge 1$, $k\ge 1$. 
This completes the proof of \eqref{II.12.17c}, and the entry.
\end{proof}

\begin{remark}
Ramanujan has the following as Corollary (ii). We quote from \cite[Volume 2, p.~141]{RamanujanNB}:
\begin{quote}
In the above results:
$$D_{r-1} = \phi_0(r)+x\phi_1(r)+x^2\phi_2(r)+\cdots.$$
\end{quote}
This is evidence that our exposition is consistent with Ramanujan's own thought process.
\end{remark}

\begin{remark} We can reverse-engineer Entry II.12.17 to obtain an algorithm to expand a power series as a continued fraction. The algorithm obtained is similar to the one given by Frank~\cite[Theorem 2.1]{Frank1946}.

Let $$F(x) = 1-A_1x+A_2 x^2 +\cdots,$$
where $A_1\neq 0$. Take $a_1=A_1$ and $D_0=1=D_1$. For $n\ge 2$, 
suppose that \(a_1,\ldots,a_{n-1}\) have already been found and are non-zero. Then, to find $a_n$, proceed as follows. 
\begin{enumerate}
\item
Compute
\(
P_n=(-1)^n[x^n]\bigl(D_{n-1}F(x)\bigr).
\)
\item Set
\[
a_n=\frac{P_n}{a_1\cdots a_{n-1}}
-(a_1+\cdots+a_{n-1}).
\]
\item If $a_n\neq 0$, 
find \(D_{n}\) using the recurrence relation $$D_n=D_{n-1}+a_{n-1}xD_{n-2},$$ for the next iteration. 
\item  If $a_n=0$, the algorithm breaks down and is terminated.
\end{enumerate}
If $a_1, a_2, \dots, a_n \neq 0$, we obtain the terms of a continued fraction 
\[
F(x)
=
\frac{1}{1}
\fplus
\frac{a_1x}{1}
\fplus
\frac{a_2x}{1}
\fplus\fdots\fplus
\frac{a_nx}{1}
+O(x^{n+1}).
\]
\end{remark}
We illustrate this remark by working out Ramanujan's example. 
\begin{Example}
We have
\begin{multline}\label{ex:4.5}
%\bigg({}_2F_1\Big(\frac12,\frac12;1;x\Big)\bigg)^2
%=
\bigg(
1+\Big(\frac12\Big)^2x
+\Big(\frac{1\cdot3}{2\cdot4}\Big)^2x^2
+\Big(\frac{1\cdot3\cdot5}{2\cdot4\cdot6}\Big)^2x^3
+\Big(\frac{1\cdot3\cdot5\cdot7}{2\cdot4\cdot6\cdot8}\Big)^2x^4
+\cdots
\bigg)^2\\
=
\frac{1}{1}
\fminus
\frac{x}{2}
\fminus
\frac{3x}{8}
\fminus
\frac{5x}{2}
\fminus
\frac{17x}{40}
\fminus
\frac{23x}{2}
\fminus
\frac{1395x}{3128}
\fminus\fdots.
\end{multline}
\end{Example}
While Ramanujan may have given this as a means to verify Entry II.12.17, we use it to illustrate the above remark, and compute the first few values of $a_i$ from the series. 

Squaring the series (formally) gives
\begin{align*}
F(x)
&=1+\frac12x+\frac{11}{32}x^2+\frac{17}{64}x^3
+\frac{1787}{8192}x^4
+\frac{3047}{16384}x^5
+\frac{42631}{262144}x^6+O(x^7).
\\
&\equiv 1-A_1x+A_2x^2 -A_3 x^3+\cdots
\end{align*}
%We see that
%Since \(F(x)=\sum_{n\ge0}A_n(-x)^n\), we have
%\[
%A_1=-\frac12,\quad A_2=\frac{11}{32},\quad
%A_3=-\frac{17}{64}
%\quad A_4=\frac{1787}{8192},
%\quad A_5=-\frac{3047}{16384},\quad
%A_6=\frac{42631}{262144}
%.
%\]

We begin with \(D_0=D_1=1\) and
\(
a_1=A_1=- 1/2.
\)

Since \(D_1=1\), the next step gives
\[
P_2=[x^2](1\cdot F(x)) = A_2=\frac{11}{32};
\quad
a_2=\frac{11/32}{-1/2}+\frac12=-\frac3{16}.
\]
Using \(a_1\), we find $D_2$:
\[
D_2=D_1+a_1xD_0=1-\frac12x.
\]
Consequently,
\[
P_3=-[x^3]\bigl(D_2F(x)\bigr)
=-\left(\frac{17}{64}-\frac12\frac{11}{32}\right)
=-\frac3{32},
\]
and
\[
a_3=\frac{-3/32}{3/32}+\frac{11}{16}
=-\frac5{16}.
\]
The next 3 steps yield
\[
a_4= -\frac{17}{80}, \quad
a_5 =-\frac{23}{80},\quad
a_6 =-\frac{1395}{6256}.
\]

The continued fraction obtained is equivalent to the one in \eqref{ex:4.5}.  
This can be seen by the following calculation. 
\begin{multline*}
\frac{1}{1}
\fminus\frac{x/2}{1}
\fminus\frac{3x/16}{1}
\fminus\frac{5x/16}{1} 
=
\frac{1}{1}
\fminus\frac{x}{2}
\fminus\frac{3x/8}{1}
\fminus\frac{5x/16}{1}\\
=
\frac{1}{1}
\fminus\frac{x}{2}
\fminus\frac{3x}{8}
\fminus\frac{5x/2}{1}
=
\frac{1}{1}
\fminus\frac{x}{2}
\fminus\frac{3x}{8}
\fminus\frac{5x}{2}.
\end{multline*}
The above shows how we get
an equivalent continued fraction, by absorbing the denominators in the next denominator of the continued fraction. 

The next corollary is actually a more general version of Entry II.12.17. Ramanujan possibly labelled it as a corollary, because it can be proved in the same manner. 
\begin{corentry}[i]{II.12.17}
Write
\[
\frac{1}{1+b_1x}
\fplus
\frac{a_1x}{1+b_2x}
\fplus
\frac{a_2x}{1+b_3x}
\fplus\fdots
=
\sum_{k=0}^{\infty} A_k(-x)^k,
\]
where \(A_0=1\). Define
\[
P_n
=
a_1a_2\cdots a_{n-1}
(a_1+b_1+a_2+b_2+\cdots+a_n+b_n),
\qquad n\ge1.
\]
Then, for \(n\ge1\),
\[
P_n
=
\sum_{k=0}^{n-1}(-1)^k\phi_k(n)A_{n-k},
\]
where \(\phi_k(n)\), \(n, k\ge1\), is defined
recursively by
\[
\phi_k(n+1)-\phi_k(n)
=
b_n\phi_{k-1}(n)
+a_{n-1}\phi_{k-1}(n-1),
\]
where we have the initial and boundary conditions:
$$\phi_0(0)=0,  \quad \phi_0(n)=1 \text{ for $n\ge 1$}, \quad \phi_k(n)=0 \text{ for $k\ge n$}.$$
\end{corentry}

\begin{proof}
The proof is on the lines of the proof of Entry II.12.17. 
Let
\[
F(x)
=
\frac{1}{1+b_1x}
\fplus
\frac{a_1x}{1+b_2x}
\fplus
\frac{a_2x}{1+b_3x}
\fplus\fdots
=
\sum_{k=0}^{\infty}A_k(-x)^k.
\]
As before, write the \(n\)th convergent as \(N_{n-1}/D_n\).
The numerators and denominators satisfy
\[
N_0=1,\qquad N_1=1+b_2x,
\qquad
D_0=1,\qquad D_1=1+b_1x,
\]
and, for \(n\ge2\),
\[
\begin{aligned}
N_n&=(1+b_{n+1}x)N_{n-1}+a_nxN_{n-2},\\
D_n&=(1+b_nx)D_{n-1}+a_{n-1}xD_{n-2}.
\end{aligned}
\]
These recurrences give
\[
\deg N_{n}\le n,\qquad \deg D_n\le n.
\]
If all \(b_i\neq0\), the degrees are exactly $n$.

We see that \(D_n(0)=1\), and, by induction,
\[
D_n
=
1+\bigl(b_1+\cdots+b_n+a_1+\cdots+a_{n-1}\bigr)x
+O(x^2).
\]

As before, it is easy to see that  \(A_1=a_1+b_1=P_1\).

For \(n\ge2\), Entry~II.12.1 gives, just as before,
\begin{align*}
D_{n-1}F-N_{n-2}
={}&(-1)^{n-1}
\frac{a_1\cdots a_{n-1}x^{n-1}}{D_n}\\
&+(-1)^n
\frac{a_1\cdots a_nD_{n-1}x^n}{D_nD_{n+1}}
+O(x^{n+1}).
\end{align*}
The coefficient of \(x^n\) on the right is
\begin{align*}
&(-1)^n a_1\cdots a_{n-1}
\bigl(b_1+\cdots+b_n+a_1+\cdots+a_{n-1}\bigr)
+(-1)^n a_1\cdots a_n\\
&\qquad=
(-1)^n a_1\cdots a_{n-1}
(a_1+b_1+\cdots+a_n+b_n)
=
(-1)^nP_n.
\end{align*}
Again, 
\(\deg N_{n-2}\le n-2\), so
\[
[x^n]\bigl(D_{n-1}F\bigr)=(-1)^nP_n.
\]

Now define \(\phi_k(n)\) by
\[
D_{n-1}(x)=\sum_{k=0}^{n-1}\phi_k(n)x^k.
\]
We have \(\phi_0(n)=1\), and we take $\phi_k(n)=0$ unless $0\le k\le n-1$. 
%This is possible since \(\deg D_{n-1}\le n-1\).
%, and coefficients outside the
%indicated range are taken to be zero. 
As earlier, we get 
\[
P_n= (-1)^n [x^n]\bigl(D_{n-1}F\bigr)
=
\sum_{k=0}^{n-1}
\phi_k(n)(-1)^{k}A_{n-k}.
\]

Finally, the denominator recurrence implies
\[
D_n-D_{n-1}
=
b_nxD_{n-1}+a_{n-1}xD_{n-2}.
\]
Comparing coefficients of \(x^k\), for \(n\ge2\) and \(k\ge1\),
gives
\[
\phi_k(n+1)-\phi_k(n)
=
b_n\phi_{k-1}(n)
+a_{n-1}\phi_{k-1}(n-1).
\]
Since \(D_0=1\) and \(D_1=1+b_1x\), we have
\[
\phi_k(2)-\phi_k(1)
=
\begin{cases}
b_1,&k=1,\\
0,&k\ge2.
\end{cases}
\]
Thus for the recurrence to hold for \(n=1, k=1\), we need $\phi_0(0)=0$, in addition to the previously stated conditions:
\(\phi_0(n)=1\) for \(n\ge1\), and \(\phi_k(n)=0\) for \(k\ge n\). With these initial and boundary conditions, we see this recurrence determines all the coefficients. 
\end{proof}

Before concluding, we note that if we replace $a_kx$ by $a x^k$ in the continued fraction considered in \eqref{II.12.17a}, we get the Rogers--Ramanujan continued fraction:
\[
\frac{1}{1}
\fplus
\frac{a x}{1}
\fplus
\frac{ax^2}{1}
\fplus\frac{ax^3}{1}
\fplus\fdots.
\]
A similar comment can be made about the continued fraction in Corollary (i) to Entry II.12.17, which motivates some of Ramanujan's generalizations of the Rogers--Ramanujan continued fraction. 

\section{What is the root of Ramanujan's genius?}
This study hints at some of the properties of Ramanujan, the mathematician. He did what many mathematicians do---repeatedly use the same trick until it becomes a technique, and organize his results from easy to complicated. He considered series and continued fractions with arithmetic progressions because special and limiting cases yield the exponential function. He asked himself questions---for example, how does one write a series corresponding to a continued fraction?---and then answered them, refining his answers over time, as he got more ideas. Such simple questions---what happens when one modifies the simplest continued fraction containing all $1$s, or when one replaces an arithmetic progression with a geometric progression---eventually led to the celebrated Rogers--Ramanujan identities. 

Perhaps Ramanujan's genius lay in his simplicity. We just have to find the right way to look at his results to understand his genius.

\section*{Declaration of the use of AI}
We have used GPT 5.6 Sol and Astra (made by OpenAI) while writing this paper. These AI engines have been used to write SageMath code, to perform symbolic computations, and in generating \LaTeX. They were useful in generating some proofs too. Once we got an idea of a proof we wanted to use, and wrote it down, we could check if the same kind of proof worked on another identity, and then generate an analogous proof. This speeded up the work considerably. The author takes responsibility for the contents of this paper.

%\bibliography{$HOME/Documents/Papers/references}{}
%\bibliographystyle{abbrv}
%\bibliographystyle{alpha}

\end{document}